\theoremstyle{plain}
\newtheorem{theorem}{Theorem}[section]
\newtheorem{lemma}[theorem]{Lemma}
\newtheorem{proposition}[theorem]{Proposition}
\newtheorem{corollary}[theorem]{Corollary}
\theoremstyle{definition}
\newtheorem*{definition}{Definition}
\theoremstyle{remark}
\newtheorem*{acknowledgment}{Acknowledgment}
\newcommand{\gen}[1]{\langle#1\rangle}
\newcommand{\Z}{\mathbb{Z}}
\newcommand{\Aut}{\mathrm{Aut}}
\newcommand{\normal}{\trianglelefteq}
\newcommand{\I}{\mathcal{I}}
\renewcommand{\L}{\mathcal{L}}
\newcommand{\ord}{\mathrm{ord}}
\newcommand{\dv}{\big{|}}
\newcommand{\ol}{\overline}
\newcommand{\mat}[4]{\left[\begin{array}{cc}#1&#2\\#3&#4\end{array}\right]}
\begin{document}
\title{Planar infinite groups}
\author{M. Rajabian, M. Farrokhi D. G. and A. Erfanian}

\keywords{Planar groups, outer planar, infinite group, locally finite, chain condition} \subjclass[2000]{Primary 05C10; Secondary 05C25, 20E15, 20F50.}
\address{Ferdowsi University of Mashhad, International Campus, Mashhad, Iran.}
\email{mehdi.rajabian@yahoo.com}
\address{Department of Pure Mathematics, Ferdowsi University of Mashhad, Mashhad, Iran.}
\email{m.farrokhi.d.g@gmail.com}
\address{Department of Pure Mathematics and Centre of Excellence in Analysis on Algebraic Structures,
Ferdowsi University of Mashhad, Iran.}
\email{erfanian@math.um.ac.ir}
\date{}

\begin{abstract}
We will determine all infinite $2$-locally finite groups as well as infinite $2$-groups with planar subgroup graph and show that infinite groups satisfying the chain conditions containing an involution do not have planar embeddings. Also, all connected outer-planar groups and outer-planar groups satisfying the chain conditions are presented. As a result, all planar groups which are direct product of connected groups are obtained.
\end{abstract}
\maketitle
\section{Introduction}
Embedding of graphs on surfaces is a central problem in the theory of graphs particularly when the surface has low genus. A lattice can be visualized as a graph in which there is an edge between two elements if one of the elements covers the other. Hence it is natural to ask which lattices can be embedded, as a graph, on a specific surface, say the plane or sphere. A Kuratowski's like theorem for planar lattices is given by Kelly and Rival in \cite{dk-ir} by providing a list of forbidden sublattices. An important family of lattices can be constructed from the subgroups of a group, the \textit{subgroup lattice}, which is studied from various points of view. Drawing of subgroups of a group according to the natural subgroup partial order appears in several contexts, hence it is interesting to know which groups have planar lattices of subgroups. For convenience, the \textit{subgroup graph} of a group $G$ is defined as the graph of its lattice of subgroups, that is, the graph whose vertices are the subgroups of $G$ such that two subgroups $H$ and $K$ are adjacent if one of $H$ or $K$ is maximal in the other. The subgroup graph of $G$ is denoted by $\L(G)$. If $\mathcal{P}$ is a graph theoretical property, then we say that a group has property $\mathcal{P}$ or it is a $\mathcal{P}$-group if the lattice of its subgroups, as a graph, has property $\mathcal{P}$. A group $G$ is planar, or $G$ is a planar group if $\L(G)$ is planar.
We note that subgroups and quotients of planar groups are planar.

Starr and Turner III \cite{cls-get} were the first to study groups $G$ with $\L(G)$ planar and classified all planar abelian groups. Also, Schmidt \cite{rs} and Bohanon and Reid \cite{jpb-lr} simultaneously classified all finite planar groups.

In this paper, we shall study the planarity of infinite non-abelian groups and show that planarity of the lattice of subgroups of a group imposes many restrictions on the structure of the given group. We show that if $G$ is a group such that any two elements of $G$ generate a finite subgroup, then $G$ is planar if and only if it is a finite group or an infinite abelian group as described in Theorems \ref{finitegroups} and \ref{infiniteabeliangroups}, respectively. As a result, it follows that there are no planar infinite solvable groups as well as FC-groups other than planar infinite abelian groups and that all Engel groups satisfying the ascending chain condition are finite. Remind that an FC-group is a group in which all conjugacy classes are finite. Also, we prove that a planar infinite group satisfying the ascending and descending chain conditions, henceforth \textit{chain conditions}, is generated by two elements and it has neither involutions nor elements whose orders are a product of three distinct primes. In contrast to chain conditions, a planar infinite group that is the union of finite pairwise comparable subgroups is abelian (see Theorem \ref{2-locallyfinite}). Therefore, by utilizing the fact that infinite $2$-groups satisfy the normalizer condition on finite subgroups, it can be shown that  every planar infinite $2$-groups is the union of finite pairwise comparable subgroups and hence is ableian. In the remainder of this paper, we shall use the notion of outer-planar graphs. A graph is \textit{outer-planar} if it has a planar embedding in which all vertices lie on the outer (unbounded) region. We classify all outer-planar groups satisfying the chain conditions. Finally, by using the structure of connected outer-planar groups, we show that a planar group that is a product of two nontrivial groups with connected subgroup graphs is isomorphic to one of the groups $\Z_{p^mq}$, $\Z_{p^mqr}$ or $\Z_{p^m}\times\Z_p$ for distinct primes $p,q$ and $r$.

The following theorem of Schmidt, Bohanon and Reid is crucial in our proofs. We note that a group $G$ is called \textit{Hasse planar} if its subgroup graph can be drawn in the $xy$-plane such that whenever $H$ is a maximal subgroup of $K$ for subgroups $H$ and $K$ of $G$, then the $y$-coordinate of $H$ is less than that of $K$.
\begin{theorem}[\cite{jpb-lr,rs}]\label{finitegroups}
Up to isomorphism, the only finite planar groups are the trivial group and
\begin{itemize}
\item[(1)]$\Z_{p^m}$, $\Z_{p^mq^n}$, $\Z_{p^mqr}$, $\Z_{p^m}\times\Z_p$,
\item[(2)]$Q_8=\gen{a,b:a^4=1,b^2=a^2,bab^{-1}=a^{-1}}$,
\item[(3)]$Q_{16}=\gen{a,b:a^8=1, b^2=a^4, bab^{-1}=a^{-1}}$,
\item[(4)]$QD_{16}=\gen{a,b:a^8=b^2=1, bab^{-1}=a^3}$,
\item[(5)]$M_{p^m}=\gen{a,b:a^{p^{m-1}}=b^p=1, bab^{-1}=a^{p^{m-2}+1}}$,
\item[(6)]$\Z_p\rtimes\Z_{q^n}=\gen{a,b:a^p=b^{q^n}=1, bab^{-1}=a^i}$, where $q\dv(p-1)$ and $\ord_p(i)=q$,
\item[(7)]$(\Z_p\times\Z_p)\rtimes\Z_q=\gen{a,b,c:a^p=b^p=c^q=1,ab=ba, cac^{-1}=a^ib^j,cbc^{-1}=a^kb^l}$, where ${\tiny\mat{i}{j}{k}{l}}$ is an element of order $q>2$ in $GL_2(p)$ and $q\dv(p+1)$, 
\end{itemize}
where $p,q$ and $r$ are distinct primes. The groups $\Z_{p^mqr}$ and $QD_{16}$ are the only planar groups that are not Hasse planar.
\end{theorem}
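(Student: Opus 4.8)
The plan is to obtain the classification by a reduction argument built on two elementary facts. First, subgroups and quotients of a planar group are planar, so it suffices to assemble a short list of ``forbidden'' non-planar groups and then eliminate every group that has one of them as a section. Second, a graph with $V$ vertices and $E$ edges is non-planar as soon as $E>3V-6$, and already as soon as $E>2V-4$ if it is bipartite --- and $\L(G)$ is bipartite whenever the subgroup lattice of $G$ is graded, in particular whenever $G$ is nilpotent (in which case $\L(G)$ is even the Cartesian product of the subgroup graphs of the Sylow subgroups of $G$). Cases not decided by these Euler counts are settled by exhibiting a Kuratowski subdivision, almost always a subdivision of $K_{3,3}$ built from three ``hub'' subgroups joined to three ``rim'' subgroups by internally disjoint chains; and the last sentence of the theorem is read off from the Kelly--Rival list of forbidden sublattices for Hasse-planar lattices.

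\textbf{A bank of forbidden sections.} I would first check that each of the following groups has non-planar subgroup graph, so that a planar group can contain none of them as a subgroup or a quotient: $\Z_p\times\Z_p\times\Z_p$, $\Z_p\times\Z_p\times\Z_q$, $\Z_{p^2}\times\Z_{q^2}\times\Z_r$, $\Z_{pqrs}$ and $\Z_{p^2}\times\Z_{p^2}$ (these are nilpotent, hence $\L(G)$ is bipartite, and a direct count gives $E>2V-4$; for $\Z_p^3$ the portion of $\L$ between the subgroups of order $p$ and those of order $p^2$ is the point--line incidence graph of the projective plane of order $p$, the Heawood graph when $p=2$); the non-abelian group of order $p^3$ and exponent $p$ for $p$ odd; the maximal-class $2$-groups $D_{16}$, $Q_{32}$, $QD_{32}$ and their larger analogues; the groups $\Z_p\rtimes\Z_{q^n}$ in which $\Z_{q^n}$ acts with order at least $q^2$; the groups $(\Z_p\times\Z_p)\rtimes\Z_q$ not of type (7); and a handful of small non-nilpotent groups such as $S_4$, $SL(2,3)$ and $A_5$. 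For those whose subgroup graph only narrowly beats, or just misses, the Euler bound --- the $p$-group cases in particular --- one produces the $K_{3,3}$-subdivision by hand.

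\textbf{The planar $p$-groups.} This is the technical core, carried out by induction on $|P|$ using that every maximal subgroup of a planar $p$-group is again a planar $p$-group. Since $\Z_p\times\Z_p\times\Z_p$ is a forbidden section, $|P/\Phi(P)|\le p^2$ by the Burnside basis theorem, so $P$ is cyclic --- giving $\Z_{p^m}$, whose $\L$ is a path --- or $2$-generated. Among $2$-generated groups $P$, an abelian one is forced to be $\Z_{p^m}\times\Z_p$ by excluding $\Z_{p^2}\times\Z_{p^2}$; a non-abelian one with a cyclic subgroup of index $p$ is pinned down by the classical classification of such $p$-groups, and after discarding the dihedral groups of order $\ge 16$ and the generalized quaternion and semidihedral groups of order $\ge 32$ (all forbidden sections) only $M_{p^m}$, $Q_8$, $Q_{16}$ and $QD_{16}$ survive; and a non-abelian $2$-generated $p$-group without a cyclic subgroup of index $p$ is eliminated case by case, the inductive control on its maximal subgroups together with the number of its subgroups of order $p$ forcing a $K_{3,3}$ into its subgroup graph. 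One then exhibits explicit planar drawings of $\L$ for $\Z_{p^m}\times\Z_p$, $M_{p^m}$, $Q_8$, $Q_{16}$ and $QD_{16}$.

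\textbf{Assembly, and the Hasse-planarity remark.} For a nilpotent planar group $G$ the Cartesian-product description of $\L(G)$, together with the classification of planar Cartesian products of graphs and the list of planar $p$-groups from the previous paragraph, forces $G$ to be cyclic with at most three prime divisors or a planar $p$-group --- accounting for the members $\Z_{p^m}$, $\Z_{p^mq^n}$, $\Z_{p^mqr}$, $\Z_{p^m}\times\Z_p$, $M_{p^m}$, $Q_8$, $Q_{16}$, $QD_{16}$ of the list. A non-nilpotent planar group is not cyclic, so locating a forbidden section inside any richer configuration shows it has exactly two prime divisors; an analysis of its Hall subgroups and of the action of a Sylow subgroup on a normal one --- the conditions $q\dv(p-1)$, $q\dv(p+1)$ and $\ord_p(i)=q$ being precisely what keeps a $K_{3,3}$ out of $\L(G)$ --- then produces the families (6) and (7). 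Finally, for the closing statement one runs the Kelly--Rival criterion over the list: every group in (1)--(7) has a Hasse-planar subgroup lattice except $\Z_{p^mqr}$ and $QD_{16}$, each of which contains a Kelly--Rival forbidden sublattice yet still admits an explicit (non-graded) planar drawing of $\L(G)$. I expect the main obstacle to lie in the $p$-group step, and inside it in the prime $p=2$: the groups $M_{2^m}$, $Q_{16}$ and $QD_{16}$ sit right on the boundary of planarity --- their subgroup graphs barely satisfy the Euler inequalities --- so separating them from $D_{16}$, $Q_{32}$ and $QD_{32}$ requires constructing genuine Kuratowski subdivisions by hand, and that bookkeeping is the delicate part.
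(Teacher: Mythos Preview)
This theorem is not proved in the paper at all: it is quoted, with citation to Bohanon--Reid \cite{jpb-lr} and Schmidt \cite{rs}, as a known classification that the paper then uses as input for its results on infinite groups. There is therefore no proof in the paper to compare your proposal against.

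For what it is worth, your outline is a fair high-level summary of the strategy in the cited sources: build a stock of small non-planar ``forbidden'' groups, use the Burnside basis theorem and the Cartesian-product structure of nilpotent subgroup lattices to pin down the planar $p$-groups and then the planar nilpotent groups, and handle the non-nilpotent case by analysing the action of one Sylow subgroup on a normal complement. But as written it is a plan rather than a proof. The non-nilpotent step in particular --- ``an analysis of its Hall subgroups \ldots then produces the families (6) and (7)'' --- compresses most of the real work into a single clause; why a non-nilpotent planar group must have a \emph{normal} Sylow subgroup, why the acting Sylow must be cyclic, and why the kernel of the action has index exactly $q$ all require genuine argument, not just an appeal to forbidden sections. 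Several of your claimed $K_{3,3}$-eliminations (for $D_{16}$, $Q_{32}$, $QD_{32}$, and the $2$-generated $p$-groups without a cyclic maximal subgroup) are likewise asserted rather than exhibited. If your goal is to provide a self-contained proof, you should expect the casework to be substantially longer than this sketch suggests; the cited papers run to a dozen pages each largely for this reason.
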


We note that in Theorem \ref{finitegroups}, $M_8\cong D_8$ and that the groups in part (6) include all dihedral groups of order $2p$, where $p$ is a prime. The following theorem of Starr and Turner III will be used in the next section.
\begin{theorem}[\cite{cls-get}]\label{infiniteabeliangroups}
An infinite abelian group is planar if and only if it is isomorphic to one of the groups $\Z_{p^{\infty}}$, $\Z_{p^{\infty}}\times\Z_p$, $\Z_{p^{\infty}}\times\Z_{q^m}$, $\Z_{p^{\infty}}\times\Z_{q^{\infty}}$ or $\Z_{p^{\infty}}\times\Z_q\times\Z_r$, where $p$, $q$ and $r$ are distinct primes and $m$ is a positive integer.  
\end{theorem}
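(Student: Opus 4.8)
The plan is to deduce both implications from the finite classification of Theorem~\ref{finitegroups}, using freely that subgroups and quotients of planar groups are planar, together with the standard fact that a graph is planar exactly when each of its finite subgraphs is. For the ``if'' direction I would compute the five subgroup lattices directly: in each case $\L(G)$ is (a disjoint union of) the Hasse diagram of a product of chains --- a grid- or ladder-like graph --- possibly together with a few isolated vertices or pendant edges, so that planarity is visible. The one case needing a word is $G=\Z_{p^\infty}\times\Z_p$, whose lattice contains, at each level $n$, the $p-1$ ``diagonal'' cyclic subgroups of order $p^{n+1}$; each of these is a vertex of degree two, and since every finite subgraph of $\L(G)$ embeds in $\L(\Z_{p^N}\times\Z_p)$ for some $N$, which is planar by Theorem~\ref{finitegroups}, planarity follows.

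For the ``only if'' direction, let $A$ be infinite abelian with $\L(A)$ planar. I would first show that $A$ is torsion: an element of infinite order yields $\Z\le A$, but $\Z$ has the quotient $\Z_{p_1p_2p_3p_4}$, which is absent from Theorem~\ref{finitegroups} and hence non-planar, so $\Z$ --- and with it $A$ --- is non-planar, a contradiction. Thus $A=\bigoplus_p A_p$. The list of Theorem~\ref{finitegroups} contains no abelian group of any of the forms $\Z_p^3$, $\Z_{p^2}\times\Z_{p^2}$, $\Z_p^2\times\Z_q$, $\Z_{p^2}\times\Z_{q^2}\times\Z_r$, $\Z_{p_1p_2p_3p_4}$ (distinct primes throughout), so none of these embeds in $A$. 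From $\Z_p^3\not\le A$ we obtain $\mathrm{rank}(A_p)\le 2$ for every $p$ (rank meaning the dimension of the $p$-torsion socle), and from $\Z_{p_1p_2p_3p_4}\not\le A$ that at most three primes occur; since $A$ is infinite, some $A_p$ is infinite.

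The technical core is the fact that a reduced abelian $p$-group of finite rank is finite: if it is bounded it is a direct sum of at most two cyclic groups, while if it is unbounded a basic subgroup of it is an unbounded direct sum of cyclic groups, hence of infinite rank, which is impossible. Writing $A_p=\Z_{p^\infty}^{s_p}\oplus R_p$ with $R_p$ reduced, this forces $R_p$ finite and $s_p+\mathrm{rank}(R_p)\le 2$. It then remains to bookkeep. Let $t$ be the number of primes $p$ with $A_p$ infinite (equivalently $s_p\ge 1$); then $t\ge 1$. If $t\ge 3$ then $\Z_{p^2}\times\Z_{q^2}\times\Z_r\le A$. If $t=2$, say $A_p,A_q$ infinite, then $\mathrm{rank}(A_p)=\mathrm{rank}(A_q)=1$ (else $\Z_p^2\times\Z_q\le A$), whence $A_p=\Z_{p^\infty}$, $A_q=\Z_{q^\infty}$; no third prime occurs (else $\Z_{p^2}\times\Z_{q^2}\times\Z_r\le A$), so $A\cong\Z_{p^\infty}\times\Z_{q^\infty}$. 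If $t=1$, say $A_p$ infinite, then $s_p=1$ (else $\Z_{p^2}\times\Z_{p^2}\le A$) and $R_p\in\{0,\Z_p\}$ (else $\Z_{p^2}\times\Z_{p^2}\le A_p$); if $R_p=\Z_p$ then no other prime occurs (else $\Z_p^2\times\Z_q\le A$), giving $A\cong\Z_{p^\infty}\times\Z_p$, while if $R_p=0$ then $A_p=\Z_{p^\infty}$ and every other $A_\ell$ is finite cyclic (else $\Z_\ell^2\times\Z_p\le A$), at most two such primes occur, and if two occur, say $q$ and $r$, then both have prime exponent (else $\Z_{p^2}\times\Z_{q^2}\times\Z_r\le A$), giving $A\cong\Z_{p^\infty}$, $\Z_{p^\infty}\times\Z_{q^m}$, or $\Z_{p^\infty}\times\Z_q\times\Z_r$. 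Collecting the surviving possibilities yields precisely the five groups of the statement.

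The step I expect to be the real obstacle is the ingredient just isolated: an infinite abelian $p$-group of rank $2$ need not, \emph{a priori}, split as a divisible group plus a cyclic group, so the argument genuinely relies on the divisible/reduced decomposition together with the finiteness of finite-rank reduced $p$-groups; the forbidden finite subgroups by themselves do not exclude a ``wild'' rank-$2$ $p$-group. Everything after this reduction is elementary checking against the finite list, and on the ``if'' side the only delicate point is the family of diagonal subgroups of $\Z_{p^\infty}\times\Z_p$, which is handled by the finite-subgraph reduction mentioned above.
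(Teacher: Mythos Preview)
The paper does not give its own proof of Theorem~\ref{infiniteabeliangroups}: it is quoted verbatim from Starr and Turner~III \cite{cls-get} as a background result, with no argument supplied. So there is nothing in the paper to compare your attempt against.

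That said, your independent argument is sound. The ``only if'' direction is correct as written: the torsion reduction via the non-planarity of $\Z_{p_1p_2p_3p_4}$, the forbidden-subgroup list extracted from Theorem~\ref{finitegroups}, the divisible/reduced splitting, and the observation that a reduced $p$-group of finite socle rank is finite (via basic subgroups and the fact that bounded pure subgroups split off) all combine exactly as you describe, and the final case analysis is accurate. On the ``if'' side your compactness reduction to Theorem~\ref{finitegroups} is the right idea; just be careful with the bookkeeping of isolated pieces. For $G=\Z_{p^\infty}\times\Z_p$ (and likewise for the other four groups) the infinite subgroups of $G$ form a small component of $\L(G)$ disconnected from the finite ones, since $\Z_{p^\infty}$ has no maximal proper subgroup; hence a finite subgraph of $\L(G)$ does not literally embed in $\L(\Z_{p^N}\times\Z_p)$, but rather in $\L(\Z_{p^N}\times\Z_p)$ together with one extra edge, which is still planar. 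With that minor adjustment your sketch goes through.
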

\section{Locally finite groups}
Locally finite groups are the groups which are most closely related to finite groups and are the best candidates for our investigation of planar groups. Indeed we study a more general case. We need the following simple lemma for the proof of Theorem \ref{locallyfinitegroups}.
\begin{lemma}\label{nestedfinitesubgroups}
Let $G$ be a planar group and $H,K$ be non-abelian finite subgroups of $G$ such that $H<K$. Then either $K\cong Q_{16}$ and $H\cong Q_8$, or $K\cong QD_{16}$ and $H\cong D_8$ or $Q_8$.
\end{lemma}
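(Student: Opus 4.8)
The plan is to reduce the statement to a finite verification via Theorem~\ref{finitegroups}. Since subgroups of a planar group are planar, both $H$ and $K$ are finite non-abelian planar groups, hence each is isomorphic to one of the groups in parts (2)--(7) of that theorem: $Q_8$, $Q_{16}$, $QD_{16}$, $M_{p^m}$, $\Z_p\rtimes\Z_{q^n}$ or $(\Z_p\times\Z_p)\rtimes\Z_q$. It is therefore enough to determine the non-abelian proper subgroups of each of these when it plays the role of $K$; the lemma will follow once we see that $Q_8$, $M_{p^m}$, $\Z_p\rtimes\Z_{q^n}$ and $(\Z_p\times\Z_p)\rtimes\Z_q$ have no such subgroup, while $Q_{16}$ and $QD_{16}$ have exactly the ones listed.

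For $Q_8$ and $M_8\cong D_8$ there is nothing to do, as every proper subgroup has order at most $4$. For $M_{p^m}$ (any prime $p$, $m\ge 3$) I would argue structurally: factoring out $\gen{a^p}$ collapses $bab^{-1}=a^{p^{m-2}+1}$ to $ba=ab$, so $M_{p^m}/\gen{a^p}\cong\Z_p\times\Z_p$, and since $\gen{a^p}$ is central whereas no element outside $\gen a$ commutes with $a$, in fact $\gen{a^p}=Z(M_{p^m})$. If a proper subgroup $N$ had image all of $\Z_p\times\Z_p$ in $M_{p^m}/Z(M_{p^m})$, then $N$ would contain a representative of the coset $a\gen{a^p}$; such an element has order $p^{m-1}$, hence generates $\gen a$, so $N\supseteq\gen a\supseteq Z(M_{p^m})$ and $N=M_{p^m}$, a contradiction. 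Thus the image of any proper subgroup $N$ in $M_{p^m}/Z(M_{p^m})$ has order at most $p$, so $N/\bigl(N\cap Z(M_{p^m})\bigr)$ is cyclic with central kernel and $N$ is abelian. For $\Z_p\rtimes\Z_{q^n}$, $\gen a$ is the unique Sylow $p$-subgroup and the kernel of the conjugation action on $\gen a$ intersects each (cyclic) Sylow $q$-subgroup in its unique maximal subgroup; so a non-abelian subgroup must contain $\gen a$ and an element acting nontrivially on it, and that element generates a whole Sylow $q$-subgroup, forcing the subgroup to be the entire group. For $(\Z_p\times\Z_p)\rtimes\Z_q$, the condition $q\mid p+1$ with $q>2$ gives $q\nmid p-1$, so the order-$q$ matrix has no eigenvalue in $\Z_p$ and $\Z_p\times\Z_p$ is an irreducible module for $\Z_q$; a non-abelian subgroup $N$ contains a $q$-element acting nontrivially on the normal subgroup $N\cap(\Z_p\times\Z_p)$, which is then a nonzero invariant subgroup and hence all of $\Z_p\times\Z_p$, so $N$ is again the whole group.

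It remains to inspect $Q_{16}$ and $QD_{16}$. In both groups every proper subgroup of order at most $4$ is abelian, so a non-abelian proper subgroup has order $8$ and is maximal. The maximal subgroups of $Q_{16}$ are $\gen a\cong\Z_8$ and two copies of $Q_8$, so $H\cong Q_8$; the maximal subgroups of $QD_{16}$ are $\gen a\cong\Z_8$, a copy of $D_8$ and a copy of $Q_8$, so $H\cong D_8$ or $Q_8$. Taking the two cases together yields the lemma.

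The step I expect to be the main obstacle is the analysis of $M_{p^m}$: one has to pin down $Z(M_{p^m})$ and the quotient $M_{p^m}/Z(M_{p^m})$ precisely and carry out the argument uniformly in $p$, so that no dihedral, generalised quaternion or semidihedral group can appear as a proper subgroup (it cannot). The two semidirect-product cases are routine but need some care with the Sylow bookkeeping.
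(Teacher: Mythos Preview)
Your proof is correct and follows the same route as the paper: reduce via Theorem~\ref{finitegroups} to the finite list of possible isomorphism types for $K$, then check case by case that only $Q_{16}$ and $QD_{16}$ have non-abelian proper subgroups. The paper compresses the verification to a single assertion (``the maximal subgroups of groups of types (5), (6) and (7) are abelian''), while you supply the details for each family; your centre/quotient argument for $M_{p^m}$ is a clean way to establish exactly that fact.
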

\begin{proof}
The result follows from the fact that the maximal subgroups of groups of types (5), (6) and (7) in Theorem \ref{finitegroups} are abelian and that $Q_{16}$ has subgroups isomorphic to $Q_8$ and $QD_{16}$ has subgroups isomorphic to $D_8$ and $Q_8$.
\end{proof}
\begin{definition}
An \textit{$n$-locally finite group} is a group such that the subgroups generated by every subset with at most $n$ elements is finite. A group that is $n$-locally finite for all $n\geq1$ is called a \textit{locally finite group}.
\end{definition}

It is clear that all subgroups and quotients of $n$-locally finite groups are $n$-locally finite and that all $(n+1)$-locally finite groups (locally finite groups) are $n$-locally finite for all $n\geq1$. In what follows, $\omega(G)$ stands for the set of all orders of elements of a group $G$ and $\pi(G)$ denotes the set of all primes in $\omega(G)$. Also, a Sylow $p$-subgroup of a group $G$ is denoted by $S_p(G)$.
\begin{theorem}\label{2-locallyfinite}
A planar $2$-locally finite group is locally finite.
\end{theorem}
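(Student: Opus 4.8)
The plan is to argue by contradiction, reducing to a finitely generated counterexample and then combining Lemma~\ref{nestedfinitesubgroups} with the planarity of the subgroup graph. Suppose $G$ is a planar $2$-locally finite group that is not locally finite. Then some finitely generated subgroup of $G$ is infinite, and since subgroups of $G$ are again planar and $2$-locally finite, I may replace $G$ by such a subgroup and assume that $G$ is finitely generated and infinite. Since the subgroup generated by one element is finite, $G$ is periodic; in particular every finitely generated abelian subgroup of $G$ is finite.

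First I would dispose of the case where every finite subgroup of $G$ is abelian: then each $2$-generated subgroup $\gen{x,y}$, being finite, is abelian, so $xy=yx$ for all $x,y\in G$, whence $G$ is abelian; but a finitely generated periodic abelian group is finite, a contradiction. So $G$ has a non-abelian finite subgroup. By Lemma~\ref{nestedfinitesubgroups} a strictly increasing chain of non-abelian finite subgroups of $G$ has length at most two: the smaller term of a proper inclusion of non-abelian finite subgroups must be $Q_8$ or $D_8$, and neither $Q_{16}$ nor $QD_{16}$ is properly contained in a finite planar group. Hence $G$ contains a non-abelian finite subgroup $M$ that is maximal among all finite subgroups of $G$; in particular $\gen{M,g}$ is infinite whenever $g\in G\setminus M$.

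Next I would rule out $M\normal G$. The quotient $\ol G=G/M$ is finitely generated, periodic, $2$-locally finite and planar, and a non-abelian finite subgroup $K/M$ of $\ol G$ would give a finite subgroup $K>M$ of $G$, contradicting maximality of $M$. Thus every finite subgroup of $\ol G$ is abelian, so by the previous paragraph $\ol G$ is abelian, hence finite, forcing $G$ to be finite --- a contradiction. Consequently $M$ is not normal in $G$. Moreover $N:=N_G(M)$ cannot have finite index in $G$: otherwise $N$ would be finitely generated, infinite, with $M$ normal in $N$ and maximal among finite subgroups of $N$, contradicting the case just settled (applied with $N$ in place of $G$). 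Therefore $[G:N_G(M)]=\infty$ and $M$ has infinitely many conjugates $M=M_1,M_2,M_3,\dots$, which form an antichain of non-abelian finite subgroups of $G$ with $\gen{M_i,M_j}$ infinite for all $i\ne j$.

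The final, and main, step is to contradict the existence of this infinite antichain using the planarity of $\L(G)$; this is where the hypothesis genuinely bites --- without it the statement fails, for instance for a Tarski monster, whose subgroup graph is planar but which is $2$-generated and infinite. I expect this step to be a configuration analysis: using Lemma~\ref{nestedfinitesubgroups} to control how the $M_i$ meet one another and which subgroups lie immediately above a given $M_i$ or inside several of the $M_i$ at once, one should locate inside $\L(G)$ a subdivision of $K_{3,3}$ or $K_5$, or one of the forbidden sublattices of Kelly and Rival, among three suitably chosen conjugates $M_i,M_j,M_k$, their pairwise intersections, and appropriate common overgroups. Pinning down exactly which vertices of $\L(G)$ play the roles of the common lower and upper neighbours of $M_i,M_j,M_k$ --- bearing in mind that an $M_i$ need not be a maximal subgroup of $G$, so that $G$ itself may not serve as such a neighbour --- is the point I expect to be the main obstacle.
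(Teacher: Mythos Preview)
Your reduction is correct through the construction of infinitely many conjugates $M_1,M_2,\ldots$ of a non-abelian maximal finite subgroup $M$; in fact once $M$ is maximal finite you get $N_G(M)=M$ immediately, since any $g\in N_G(M)\setminus M$ would yield the finite overgroup $\gen{g}M\supsetneq M$, so the detour through $G/M$ is unnecessary. The route, however, diverges completely from the paper's, which never invokes Lemma~\ref{nestedfinitesubgroups} or maximal finite subgroups at all.

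The final step is a genuine gap, not just a detail to be filled in. To extract a subdivision of $K_{3,3}$ or $K_5$ from three conjugates $M_i,M_j,M_k$ you need paths in $\L(G)$ running \emph{upward} from the $M_i$ to a common vertex, and nothing in your hypotheses supplies them. The group $G$ satisfies no chain condition: there is no reason for any $M_i$ to be covered by a subgroup, for $G$ to possess maximal subgroups, or for any finite chain of covering relations to connect $M_i$ to $G$ or to $\gen{M_i,M_j}$. Two-local finiteness constrains the lattice \emph{below} each $M_i$ (everything there is finite and governed by Theorem~\ref{finitegroups}), but says nothing about the infinite part above. Your own Tarski-monster remark is the warning sign: there the edges $M_i\text{--}G$ exist only because each $M_i$ happens to be maximal in $G$, which is exactly what you cannot assume here. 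I do not see how to close this gap without importing some ascending chain condition, which is not available.

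For comparison, the paper's argument is structural rather than lattice-geometric. It first shows that any two $p$-elements of order exceeding $p$ generate intersecting cyclic groups, and uses this to build an ascending union $T$ of finite cyclic normal subgroups, so that $T$ is locally finite. In $\ol G=G/T$ every element has square-free order; after factoring out the hypercentre one reduces to a centerless $\ol G$ whose nontrivial elements all have prime order, and a case analysis (via Theorem~\ref{finitegroups}) of the possible isomorphism types of $\gen{\ol x,\ol y}$ for elements of distinct prime orders forces an infinite elementary abelian centralizer, a contradiction. No step requires locating covers above a given subgroup.
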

\begin{proof}
Let $\pi^*(G)=\{p\in\pi(G):p^2\in\omega(G)\}$. We claim that $\gen{x}\cap\gen{y}\neq1$ for all $p$-elements $x,y\in G$ of orders $>p$. Suppose on the contrary that $\gen{x}\cap\gen{y}=1$. If $xy=yx$ then $\gen{x,y}\cong\gen{x}\times\gen{y}$, which contradicts Theorem \ref{finitegroups}. Thus $H=\gen{x,y}$ is a non-abelian finite group with $p$-elements $x,y$ of order $>p$ and, by Theorem \ref{finitegroups}, we have the following cases:

(i) $H\cong Q_8$, $Q_{16}$, $QD_{16}$ or $M_{p^k}$ is a $p$-group. 

(ii) $H\cong\Z_q\rtimes\Z_{p^k}$.

If (i) holds, then $H$ has a maximal cyclic subgroup $\gen{z}$, which implies that  $x^p,y^p\in\gen{z}$. Hence $\gen{x^p}\cap\gen{y^p}\neq1$ and consequently $\gen{x}\cap \gen{y}\neq1$, which is a contradiction. Assume that (ii) holds. Then $[S_p(H):C_{S_p(H)}(S_q(H))]=p$, which implies that  $x^p,y^p\in C_{S_p(H)}(S_q(H))$.  Hence $\gen{x^p}\cap\gen{y^p}\neq 1$ so that $\gen{x}\cap\gen{y}\neq 1$, which is a contradiction. Having eliminated (i) and (ii) we have $\gen{x}\cap\gen{y}\neq1$, from which it follows that $\Theta_p(G)=\bigcap_{|x|=p^m>p}\gen{x}$ is a non-trivial normal cyclic subgroup of $G$ for all $p\in\pi^*(G)$. Let $\Theta(G)=\gen{\Theta_p(G):p\in\pi^*(G)}$. Then $\Theta(G)$ is a finite cyclic normal subgroup of $G$, which implies that $|\pi^*(G)|\leq3$. Let $\Theta_0(G)=1$ and define $\Theta_i(G)$ inductively by $\Theta_i(G)/\Theta_{i-1}(G)=\Theta(G/\Theta_{i-1}(G))$ for all $i\geq1$. Since $\Theta_i(G)/\Theta_{i-1}(G)$ is a cyclic group for all $i\geq1$, $T=\bigcup \Theta_i(G)$ is a locally finite group. If $\ol{G}=G/T$ is finite, then $G$ is locally finite and we are done. Thus we may assume that $\ol{G}$ is infinite. 

Since $\Theta(\ol{G})=1$, the elements of $\ol{G}$ have square-free order. If $\ol{x},\ol{y}$ are non-commuting elements of $\ol{G}$, then by Theorem \ref{finitegroups}, $\gen{\ol{x},\ol{y}}\cong\Z_p\rtimes\Z_q$ or $(\Z_p\times\Z_p)\rtimes\Z_q$, which implies that $\ol{x},\ol{y}$ have prime orders. Thus every element with composite order is central. By Theorem \ref{finitegroups}, $Z_\infty(\ol{G})$ is finite and $\ol{G}/Z_\infty(\ol{G})$ is a centerless group. Thus we may assume without loss of generality that $\ol{G}$ is a centerless group with nontrivial elements of prime orders. If the number of $p$-elements of $\ol{G}$ is finite for some $p\in\pi(\ol{G})$, then $\ol{G}$ has a finite Sylow $p$-subgroup $\ol{S}$ with finitely many conjugates. Since $N_{\ol{G}}(\ol{S})/C_{\ol{G}}(\ol{S})\leq\Aut(\ol{S})$ and $C_{\ol{G}}(\ol{S})\leq\ol{S}$, it follows that $N_{\ol{G}}(\ol{S})$ is finite. On the other hand, $[\ol{G}:N_{\ol{G}}(\ol{S})]$ is finite, which implies that $\ol{G}$ is finite, a contradiction. Hence $\ol{G}$ has infinitely many $p$-elements for all primes $p\in\pi(\ol{G})$. Clearly, $\ol{G}$ is not a $p$-group. Let $p=\min\pi(\ol{G})$ and $q\in\pi(\ol{G})\setminus\{p\}$, $\ol{x}$ be a $p$-element and $\ol{Y}$ be the set of all $q$-elements of $\ol{G}$. Then $\gen{\ol{x},\ol{y}}=\gen{\ol{y}}\rtimes\gen{\ol{x}}$ or $\gen{\ol{x},\ol{x}^{\ol{y}}}\rtimes\gen{\ol{y}}$ for all $\ol{y}\in\ol{Y}$. If $\gen{\ol{x},\ol{y}}=\gen{\ol{y}}\rtimes\gen{\ol{x}}$ and $\gen{\ol{x},\ol{z}}=\gen{\ol{z}}\rtimes\gen{\ol{x}}$ for some $\ol{y},\ol{z}\in\ol{Y}$ such that $\gen{\ol{y}}\neq\gen{\ol{z}}$, then $\ol{x}\notin\gen{\ol{y},\ol{z}}$ for otherwise $\ol{y}\ol{z}\neq\ol{z}\ol{y}$ and $\gen{\ol{y},\ol{z}}=\gen{\ol{x},\ol{x}^{\ol{y}}}\rtimes\gen{\ol{y}}=\gen{\ol{x},\ol{y}}=\gen{\ol{y}}\rtimes\gen{\ol{x}}$, which is a contradiction. Therefore $\gen{\ol{x},\ol{y},\ol{z}}$ is a non-abelian finite subgroup of $\ol{G}$ properly containing $\gen{\ol{y},\ol{z}}$, which is impossible by Theorem \ref{finitegroups}. Thus $\gen{\ol{x},\ol{y}}=\gen{\ol{x},\ol{x}^{\ol{y}}}\rtimes\gen{\ol{y}}$ for infinitely many elements $\ol{y}\in\ol{Y}$, from which it follows that $C_{\ol{G}}(\ol{x})$ is infinite. On the other hand, $C_{\ol{G}}(\ol{x})$ is a $p$-group so that $C_{\ol{G}}(\ol{x})$ is an infinite elementary abelian $p$-group leading us to a contradiction. The proof is complete.
\end{proof}
\begin{theorem}\label{locallyfinitegroups}
A planar infinite locally finite group is abelian.
\end{theorem}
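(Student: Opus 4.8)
The plan is to argue by contradiction using Lemma \ref{nestedfinitesubgroups} to forbid tall towers of non-abelian finite subgroups. Suppose $G$ is a planar infinite locally finite group that is not abelian. Then there are $x,y\in G$ with $xy\neq yx$, and since $G$ is locally finite the subgroup $H_0=\gen{x,y}$ is a finite non-abelian subgroup of $G$. By Theorem \ref{finitegroups}, $H_0$ is one of the non-abelian finite planar groups listed there; in the sequel all we shall use is that $H_0$ is finite and non-abelian.

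Next I would exploit local finiteness to produce a strictly increasing chain of finite non-abelian subgroups above $H_0$. Since $G$ is infinite and $H_0$ is finite, choose $g_1\in G\setminus H_0$ and set $H_1=\gen{H_0,g_1}$. As $H_0$ is finite it is finitely generated, so $H_1$ is finitely generated, hence finite because $G$ is locally finite; moreover $H_0<H_1$ strictly (as $g_1\notin H_0$), and $H_1$ is non-abelian since it contains $H_0$. Repeating the construction, choose $g_2\in G\setminus H_1$ and set $H_2=\gen{H_1,g_2}$, a finite non-abelian subgroup with $H_1<H_2$. Thus $H_0<H_1<H_2$ is a chain of three properly nested non-abelian finite subgroups of the planar group $G$.

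Finally I would invoke Lemma \ref{nestedfinitesubgroups} twice. Applied to the pair $H_1<H_2$, it forces $H_1\cong Q_8$ or $H_1\cong D_8$, so $|H_1|=8$; applied to the pair $H_0<H_1$, it forces $H_1\cong Q_{16}$ or $H_1\cong QD_{16}$, so $|H_1|=16$. This contradiction shows that $G$ admits no chain of three properly nested non-abelian finite subgroups, contradicting the construction; hence $G$ must be abelian. (If the sharper conclusion that $G$ is one of the groups of Theorem \ref{infiniteabeliangroups} is wanted, one simply combines ``abelian'' with that theorem.)

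The argument is short once Lemma \ref{nestedfinitesubgroups} is in hand, and that is where the real content sits: the lemma, resting on the classification in Theorem \ref{finitegroups}, is precisely what prevents unbounded towers of non-abelian finite subgroups. The only point that needs a little care is the appeal to local finiteness to ensure that each $\gen{H_{i-1},g_i}$ is again finite, so that $H_0<H_1<H_2$ genuinely consists of finite subgroups; note we do not need the full strength of Theorem \ref{2-locallyfinite} here, only the elementary fact that a finitely generated locally finite group is finite.
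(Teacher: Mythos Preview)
Your proof is correct and follows essentially the same route as the paper's: both arguments build a proper chain of non-abelian finite subgroups via local finiteness and then derive a contradiction from Lemma~\ref{nestedfinitesubgroups}. The paper phrases the endgame slightly differently (finding a single pair $H<K$ with $|H|>16$), whereas you use a length-three chain and two applications of the lemma to pin $|H_1|$ to both $8$ and $16$; these are minor variants of the same idea.
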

\begin{proof}
If $G$ is not abelian, then it has two non-abelian finite subgroups $H,K$ such that $H<K$ and $|H|>16$ contradicting Lemma \ref{nestedfinitesubgroups}. Therefore $G$ is abelian.
\end{proof}
\begin{corollary}\label{solublegroups}
A planar infinite soluble group is abelian.
\end{corollary}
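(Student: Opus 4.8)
The plan is to deduce the corollary from Theorem \ref{locallyfinitegroups} by first showing that every planar soluble group is locally finite. Since subgroups of planar groups are planar and subgroups (as well as quotients) of soluble groups are soluble, it is enough to prove that a finitely generated planar soluble group $H$ is finite, and I would establish this by induction on the derived length $d$ of $H$.

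For $d\le1$ the group $H$ is a finitely generated planar abelian group; by Theorem \ref{infiniteabeliangroups} every infinite planar abelian group contains $\Z_{p^{\infty}}$ as a direct factor and is therefore not finitely generated, so $H$ must be finite. (Alternatively, one can observe that $\Z$ is not planar, since $\L(\Z)$ is the divisibility lattice of the positive integers, which already contains the non-planar Boolean lattice on four atoms; this forces $H$ to be a finite abelian group.) For $d\ge2$, the quotient $H/H'$ is a finitely generated planar abelian group, hence finite by the base case, so $H'$ has finite index in $H$; being a finite-index subgroup of a finitely generated group, $H'$ is itself finitely generated, and it is planar and soluble of derived length $d-1$, hence finite by the inductive hypothesis. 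Thus $H$ is an extension of a finite group by a finite group and so is finite, completing the induction.

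Applying this to every finitely generated subgroup of a planar soluble group $G$ shows that $G$ is locally finite, and Theorem \ref{locallyfinitegroups} then yields that $G$ is abelian whenever it is infinite. The argument is essentially routine; the only step requiring care is the descent from $H$ to $H'$, which relies on the fact that finite-index subgroups of finitely generated groups are again finitely generated, so that the induction on derived length can proceed.
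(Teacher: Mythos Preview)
Your argument is correct and is precisely the reasoning the paper leaves implicit: the corollary is stated without proof immediately after Theorem~\ref{locallyfinitegroups}, and the intended deduction is to show that a planar soluble group is locally finite---which your induction on derived length accomplishes, using that subgroups and quotients of planar groups are planar and that none of the infinite abelian groups in Theorem~\ref{infiniteabeliangroups} is finitely generated---and then apply Theorem~\ref{locallyfinitegroups}. One could alternatively stop the induction at two-generator subgroups and route through Theorem~\ref{2-locallyfinite}, but your direct argument is just as clean.
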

\begin{corollary}\label{FCgroups}
A planar infinite FC-group is abelian.
\end{corollary}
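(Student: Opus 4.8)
The plan is to derive the corollary from Theorem~\ref{locallyfinitegroups} by showing that a planar infinite FC-group $G$ is locally finite. The first and most important point is that $G$ is periodic: if some $x\in G$ had infinite order, then $\gen{x}\cong\Z$ would be a subgroup of $G$, hence planar, whereas $\Z$ is not planar, being an infinite abelian group that does not appear in the list of Theorem~\ref{infiniteabeliangroups}. (Directly: between $p_1p_2p_3p_4\Z$ and $\Z$, for four distinct primes $p_i$, the subgroup graph $\L(\Z)$ contains a copy of the $4$-dimensional cube graph, which is bipartite with $16$ vertices and $32$ edges and is therefore non-planar.) Thus every element of $G$ has finite order.

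Next I would use the classical fact that a periodic FC-group is locally finite, recalling its short proof. Let $\{x_1,\dots,x_n\}$ be a finite subset of $G$ and put $H=\gen{x_1,\dots,x_n}$. Each $x_i$ has finite order and only finitely many conjugates in $H$, so by Dietzmann's lemma its normal closure $N_i=\gen{x_i^H}$ is a finite normal subgroup of $H$; since a product of finitely many finite normal subgroups is again a finite subgroup, $N=N_1\cdots N_n$ is finite, and as $N$ is normal in $H$ and contains every $x_i$ we get $N=H$. Hence $H$ is finite, so $G$ is locally finite. (One could instead quote B.~H.~Neumann's theorem that a finitely generated FC-group $H$ satisfies $[H:Z(H)]<\infty$: then $Z(H)$ is a finitely generated periodic abelian group, hence finite, and therefore $H$ is finite. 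A third route, in the spirit of this paper, is to note that the argument above with $n=2$ already shows that $G$ is $2$-locally finite, and then invoke Theorem~\ref{2-locallyfinite}.)

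Finally, $G$ is a planar infinite locally finite group, so Theorem~\ref{locallyfinitegroups} yields that $G$ is abelian, which is the assertion of the corollary; Theorem~\ref{infiniteabeliangroups} then even determines its isomorphism type. I do not expect a genuine obstacle here, since the argument is a reduction; the only point that needs care is the implication ``planar $\Rightarrow$ periodic'', that is, the non-planarity of $\L(\Z)$, and this is immediate from Theorem~\ref{infiniteabeliangroups}. It is worth stressing that periodicity is genuinely needed: FC-groups are in general very far from being $2$-locally finite (for instance $\Z\times\Z$ is an FC-group), so the failure of $\Z$ to be planar is doing essential work.
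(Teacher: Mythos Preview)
Your proposal is correct and follows essentially the same route as the paper: reduce to the locally finite case and invoke Theorem~\ref{locallyfinitegroups}. The paper compresses the entire reduction into a single citation of \cite[Theorem~14.5.10]{djsr}, whereas you spell out both ingredients explicitly---that planarity forces periodicity (since $\Z$ is excluded by Theorem~\ref{infiniteabeliangroups}) and that a periodic FC-group is locally finite via Dietzmann's lemma---which makes the argument self-contained.
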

\begin{proof}
The result follows from \cite[Theorem 14.5.10]{djsr}.
\end{proof}
\begin{corollary}\label{Engelgroups}
A planar Engel group satisfying the ascending chain condition is finite.
\end{corollary}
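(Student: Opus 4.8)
The plan is to argue by contradiction. Suppose $G$ is a planar Engel group satisfying the ascending chain condition on subgroups and that $G$ is infinite; the goal is to contradict the classification of infinite planar abelian groups already available.

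First I would show that $G$ is nilpotent. Since $G$ is an Engel group, every element of $G$ is a left Engel element. By a classical theorem of Baer (see, e.g., \cite[Theorem 12.3.7]{djsr}), in a group satisfying the maximal condition on subgroups the set of left Engel elements coincides with the Fitting subgroup, which is nilpotent. As the ascending chain condition is precisely the maximal condition, it follows that $G$ equals its own Fitting subgroup; hence $G$ is nilpotent, and in particular soluble.

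Next, being an infinite planar soluble group, $G$ is abelian by Corollary~\ref{solublegroups}. Now Theorem~\ref{infiniteabeliangroups} lists all infinite planar abelian groups, and each group on that list contains a copy of the Pr\"ufer group $\Z_{p^{\infty}}$ for some prime $p$ (in fact as a direct factor). But $\Z_{p^{\infty}}$ admits the strictly ascending chain of subgroups of orders $p<p^2<p^3<\cdots$, so $G$ fails the ascending chain condition, a contradiction. Therefore $G$ must be finite.

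The only nonroutine ingredient is Baer's characterisation of the left Engel elements of a group with the maximal condition; everything after the reduction to nilpotency is an immediate consequence of Corollary~\ref{solublegroups}, Theorem~\ref{infiniteabeliangroups}, and the observation that $\Z_{p^{\infty}}$ violates the ascending chain condition. So the main (indeed essentially the only) obstacle is invoking the correct Engel-theoretic theorem; no calculation is required.
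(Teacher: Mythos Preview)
Your proof is correct and follows essentially the same route as the paper. The paper's proof is the single line ``The result follows from \cite[Theorem 12.3.7]{djsr}'', which is precisely Baer's theorem that you invoke; the remaining steps you spell out (nilpotent $\Rightarrow$ soluble $\Rightarrow$ abelian via Corollary~\ref{solublegroups}, and then the observation that every group in Theorem~\ref{infiniteabeliangroups} contains a copy of $\Z_{p^{\infty}}$ and hence fails the ascending chain condition) are exactly the implicit deductions the paper leaves to the reader.
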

\begin{proof}
The result follows from \cite[Theorem 12.3.7]{djsr}.
\end{proof}

We conclude this section by a partial result on left Engel elements.
\begin{proposition}\label{engelelements}
Let $x$ be a left Engel element of a planar group $G$ which satisfies the ascending chain condition. Then $x$ has finitely many conjugates. In particular, one of the following conditions hold:
\begin{itemize}
\item[(1)]$\gen{x}\normal G$ and $|x^G|\leq\varphi(|x|)$,
\item[(2)]$\gen{x}\not\normal G$, $|x|=p^k\neq4$ ($k>1$) and $|x^G|\leq p^k(p-1)$,
\item[(3)]$\gen{x}\not\normal G$, $|x|=p\neq2$ and $|x^G|\leq p^2-1$,
\item[(4)]$\gen{x}\not\normal G$, $|x|=2$ and $|x^G|\leq5$, or
\item[(5)]$\gen{x}\not\normal G$, $|x|=4$ and $|x^G|\leq6$,
\end{itemize}
where $\varphi$ denotes the Euler totient function.
\end{proposition}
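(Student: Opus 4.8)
The plan is to split the argument in two: first prove that $x^G$ is finite by Engel-theoretic means, and then identify the normal closure $\gen{x^G}$ among the finite planar nilpotent groups and read off the bound on $|x^G|$ in each case.

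For the finiteness of $x^G$, I would use that the ascending chain condition on subgroups is equivalent to the maximal condition, i.e. every subgroup is finitely generated. By a theorem of Baer (see \cite{djsr}), in a group with the maximal condition every left Engel element lies in the Fitting subgroup, which is moreover nilpotent; thus $x\in F(G)$ and $F(G)$ is nilpotent. As a subgroup of the planar group $G$, the group $F(G)$ is planar and satisfies the ascending chain condition, and being nilpotent it is an Engel group, so $F(G)$ is finite by Corollary \ref{Engelgroups}. Since $F(G)\normal G$ and $x\in F(G)$ we obtain $x^G\subseteq F(G)$, whence $|x^G|\leq|F(G)|<\infty$ and $|x|<\infty$. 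At this point I would also record that $G$ is a torsion group: an infinite cyclic subgroup of $G$ would be planar, contradicting Theorem \ref{infiniteabeliangroups}.

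Next, set $K=\gen{x^G}$, the normal closure of $x$ in $G$. Then $K$ is a finite normal subgroup of $G$, it is planar as a subgroup of $G$, and it is nilpotent as a subgroup of $F(G)$; hence by Theorem \ref{finitegroups} it is one of the nilpotent groups on that list, namely $\Z_{p^m}$, $\Z_{p^mq^n}$, $\Z_{p^mqr}$, $\Z_{p^m}\times\Z_p$, $Q_8$, $Q_{16}$, $QD_{16}$ or $M_{p^m}$. If $K$ is cyclic, then $\gen{x}$ is the unique subgroup of $K$ of its order, hence characteristic in $K$ and normal in $G$. Whenever $\gen{x}\normal G$, every conjugate $x^g$ generates $\gen{x}$, so $x^G$ lies among the generators of the cyclic group $\gen{x}$ and $|x^G|\leq\varphi(|x|)$; this is case (1). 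So I may assume $\gen{x}\not\normal G$, in which case $K$ is non-cyclic and, from the list, is a $p$-group: $\Z_{p^m}\times\Z_p$, $Q_8$, $Q_{16}$, $QD_{16}$ or $M_{p^m}$, and $|x|=p^k$ for some $k\geq1$. In each of $Q_{16}$, $QD_{16}$ and $M_{p^m}$ the Frattini quotient is elementary abelian of rank $2$, so the normal closure of any single element inside such a group is a proper subgroup; hence if $K$ were one of these groups, $G$ would contain $K$ properly, and picking $g\in G\setminus K$ (of finite order since $G$ is torsion) the group $K\gen{g}$ would be a finite non-abelian group properly containing $K$. By Lemma \ref{nestedfinitesubgroups} this is impossible when $K\cong Q_{16}$ or $K\cong QD_{16}$, and it forces $M_{p^m}\cong D_8$ when $K\cong M_{p^m}$. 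Therefore $K\in\{\Z_{p^m}\times\Z_p,\ Q_8,\ D_8\}$.

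Finally, I would use that $x^G\subseteq K$ and that every element of $x^G$ has order $p^k$, so $|x^G|$ is at most the number of elements of $K$ of order $p^k$. A direct count in $\Z_{p^m}\times\Z_p$ gives $p^2-1$ elements of order $p$ and $p^k(p-1)$ elements of order $p^k$ for $k\geq2$; reading these off (and treating $p=2$ separately for the values $4$ and $3$) gives cases (2), (3), (4) and (5). If $K=Q_8$, then $x$ is not the central involution, since otherwise $\gen{x}=Z(Q_8)$ would be characteristic in $K$ and normal in $G$; hence $|x|=4$ and $|x^G|\leq6$, which is case (5). If $K=D_8$, then $x$ does not lie in the cyclic subgroup of order $4$ of $D_8$, since otherwise $\gen{x}$ would be characteristic in $D_8$ and normal in $G$; hence $x$ is a reflection, $|x|=2$ and $|x^G|\leq4\leq5$, which is case (4). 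The hard part will be the elimination of $Q_{16}$ and $QD_{16}$ and the reduction of $M_{p^m}$ to $D_8$: the element counts in $Q_{16}$ already exceed the bound in (5), so these cases genuinely must be excluded, and this is precisely where Lemma \ref{nestedfinitesubgroups} together with the absence of infinite cyclic subgroups is needed. Everything else is routine bookkeeping over the short list of finite planar $p$-groups.
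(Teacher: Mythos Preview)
Your argument is correct and follows the same two-step skeleton as the paper: Baer's theorem places $x$ in the nilpotent Fitting subgroup, and then $\langle x^G\rangle$ is identified among the finite planar nilpotent groups of Theorem~\ref{finitegroups}. For the finiteness of $x^G$ you invoke Corollary~\ref{Engelgroups} directly on $F(G)$, whereas the paper argues by contradiction that an infinite $F(G)$ would be locally finite, hence abelian (Theorem~\ref{locallyfinitegroups}), hence impossible under the ascending chain condition; these are equivalent, since Corollary~\ref{Engelgroups} is itself deduced from Theorem~\ref{locallyfinitegroups}.

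The genuine difference is in the second half. The paper simply lists the eight possible isomorphism types for $\langle x^G\rangle$ and asserts that ``a simple verification'' yields the stated bounds, without excluding any of $Q_{16}$, $QD_{16}$, $M_{p^m}$. You instead observe that each of these has Frattini quotient of rank~$2$, so the normal closure of a single element is proper, forcing $G\supsetneq K$; then, using that $G$ is torsion (no planar $\Z$), any $g\in G\setminus K$ produces a finite non-abelian $K\langle g\rangle\supsetneq K$, which Lemma~\ref{nestedfinitesubgroups} rules out except when $K\cong Q_8$ or $D_8$. This extra step is not cosmetic: for $K\cong Q_{16}$ with $|x|=4$ and $\langle x\rangle\not\normal G$, the $\Aut(Q_{16})$-orbit of $x$ has size~$8$, and one checks that $|x^G|=8$ whenever $\langle x^G\rangle=Q_{16}$, exceeding the bound~$6$ of case~(5). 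So your elimination via Lemma~\ref{nestedfinitesubgroups} supplies an argument the paper's ``simple verification'' leaves implicit. The remaining element counts in $\Z_{p^m}\times\Z_p$, $Q_8$ and $D_8$ are exactly as you describe, and the bounds you obtain (in fact $\leq4$ in case~(4)) are consistent with, and sometimes sharper than, those stated.
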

\begin{proof}
First suppose that $x^G$ is infinite. By \cite[Theorem 12.3.7]{djsr}, $x\in F(G)$, the Fitting subgroup of $G$, and hence $F(G)$ is an infinite nilpotent subgroup of $G$. Since $F(G)$ is a locally finite group, by Theorem \ref{locallyfinitegroups}, $F(G)$ is abelian, which contradicts Theorem \ref{infiniteabeliangroups}. Now, suppose that $x^G$ is finite. Since $F(G)$ is nilpotent, $\gen{x^G}$ is nilpotent and by Theorem \ref{finitegroups}, it is isomorphic to $\Z_{p^m}$, $\Z_{p^mq^n}$, $\Z_{p^mqr}$, $\Z_{p^m}\times\Z_p$, $Q_8$, $Q_{16}$, $QD_{16}$ or $M_{p^n}$. A simple verification shows that the size of $x^G$ is bounded above by the numbers given in the proposition and we are done.
\end{proof}
\section{Groups satisfying chain conditions}
In this section, we shall study planar infinite groups satisfying the chain conditions. We note that, by Theorem \ref{infiniteabeliangroups}, such groups are non-abelian. It is worth noting that the class of groups satisfying the chain conditions contains some important infinite simple groups, for instance \textit{Tarski monsters}. A \textit{Tarski group} is a group in which every nontrivial proper subgroup has a fixed prime order. It is well-known that Tarski groups exist for all primes $>10^{75}$ (see \cite{ayo}). Clearly, the subgroup graph of a Tarski group is an infinite bipartite graph $K_{2,\infty}$, where the infinite part is countable. Tarski groups are planar groups satisfying the chain conditions, hence a complete classification of all planar groups satisfying the chain conditions might be very difficult in general. Nevertheless, we are able to give some general properties of such groups. Our first result gives a criterion for the order of elements of such groups whose proof uses the following famous theorem of Platt.
\begin{theorem}[Platt \cite{crp}]
A lattice $L$ is Hasse planar if and only if the graph obtained from joining the minimum and maximum elements is planar.
\end{theorem}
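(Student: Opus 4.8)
The plan is to prove the two directions of the stated equivalence separately. Write $D$ for the (undirected) Hasse diagram of $L$ and $\Gamma$ for the graph obtained from $D$ by adjoining an edge $e$ joining the least element $\hat{0}$ and the greatest element $\hat{1}$ of $L$, so that the claim is that $L$ is Hasse planar if and only if $\Gamma$ is planar. The forward direction is essentially immediate: starting from a Hasse-planar drawing of $L$, which we may assume has every covering edge drawn as a $y$-monotone arc along which $y$ strictly increases, every point of the picture has $y$-coordinate in $[y(\hat{0}),y(\hat{1})]$, so a vertical ray issued downward from $\hat{0}$ and one issued upward from $\hat{1}$ meet no part of the drawing; hence $\hat{0}$ and $\hat{1}$ both lie on the boundary of the unbounded face, and a new arc from $\hat{0}$ to $\hat{1}$ routed through that face exhibits $\Gamma$ as planar.

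For the converse, assume $\Gamma$ is planar; the cases $|L|\le 2$ are trivial, so assume $|L|\ge 3$, in which case a routine check (using connectedness of $D$ and the presence of the bounds $\hat{0},\hat{1}$) shows $\Gamma$ is $2$-connected. Fix a planar embedding of $\Gamma$; since $e$ bounds two faces, take one of them to be the unbounded face, so $e$ lies on the outer boundary. Orient every edge of $\Gamma$ upward, from the smaller of its endpoints to the larger, so that $e$ becomes $\hat{0}\to\hat{1}$. Because $L$ is a lattice, every element other than $\hat{0}$ covers something and every element other than $\hat{1}$ is covered by something; hence this orientation is acyclic, with $\hat{0}$ as its unique source and $\hat{1}$ as its unique sink. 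Thus $\Gamma$, with this embedding and orientation, sits in the setting of planar $st$-graphs, its source and sink being joined by an edge on the outer face.

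The crux is to prove that no internal vertex $v$ is \emph{interleaved}, i.e.\ that at every internal vertex the incoming edges form a single contiguous block in the rotation (equivalently, so do the outgoing ones). Granting this, an Euler count --- peaks and valleys alternate along every face boundary, and the number of local sources, like the number of local sinks, totals over all faces to the number of faces --- shows that each face has exactly one local source and one local sink, so that $\Gamma$ is a planar $st$-graph in the strong sense; the standard construction then yields an upward planar drawing (for instance, give each $x$ the $y$-coordinate equal to the length of a longest chain from $\hat{0}$ to $x$, and read the $x$-coordinates off the left-to-right order of vertices in the embedding), and deleting the arc of $e$ from it leaves an upward planar, hence Hasse-planar, drawing of $L$. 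To rule out an interleaved $v$, suppose one exists: then we may select elements $w_1,w_2$ covering $v$ and elements $u_1,u_2$ covered by $v$ whose edges to $v$ appear in the cyclic order $u_1,w_1,u_2,w_2$ around $v$. Extend saturated chains from $v$ upward through $w_1$ and through $w_2$, stopping both at the least element lying on both chains; their union is a simple closed curve $\sigma$ every vertex of which lies in $[v,\hat{1}]$, and, by the cyclic order, the edges $vu_1$ and $vu_2$ leave $v$ into the two different regions bounded by $\sigma$, so $u_1$ and $u_2$ lie on opposite sides of $\sigma$. But the path in $D$ running from $u_1$ down to the meet $u_1\wedge u_2$ and back up to $u_2$ has every vertex strictly below $v$, hence off $\sigma$, so it cannot cross $\sigma$ in the plane embedding --- a contradiction.

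I expect this no-interleaved-vertex claim, with the attendant Jordan-curve bookkeeping, to be the one genuinely delicate point; the $2$-connectedness of $\Gamma$ and everything downstream of the claim are routine, being the standard theory of planar $st$-graphs. Finally, for an infinite lattice $L$ --- of the kind arising in our applications, where the chain conditions are in force --- one reduces to the finite case by the compactness principle that a countable graph is planar as soon as each of its finite subgraphs is.
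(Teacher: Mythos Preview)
The paper does not prove Platt's theorem; it is quoted as a known result from \cite{crp} and invoked as a black box in the proof of Theorem~\ref{pqr}. There is therefore no proof in the paper to compare your attempt against.

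That said, your sketch is essentially the standard argument and is sound. The forward direction is immediate as you describe. For the converse, the decisive step is the exclusion of interleaved vertices, and your Jordan-curve argument works: the cycle $\sigma$ built from two upward saturated chains through $w_1$ and $w_2$ separates $u_1$ from $u_2$, while the path $u_1\to u_1\wedge u_2\to u_2$ sits strictly below $v$ and hence misses $\sigma$, giving the contradiction. One phrase worth tightening is ``the least element lying on both chains'': you mean the least element of $P_1\cap P_2$ strictly above $v$, which exists because $P_1\cap P_2$ is a subset of the chain $P_1$ and is therefore totally ordered; with that reading the two $v$--$z$ subpaths are internally disjoint and $\sigma$ really is a simple closed curve. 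After that, the passage to an upward drawing via the theory of planar $st$-graphs is routine, and your closing remark on reducing the infinite case (under chain conditions) to the finite one by compactness is appropriate for the applications in this paper, where in fact only finite lattices such as $\L(\langle x\rangle)$ with $|x|=pqr$ are needed.
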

\begin{theorem}\label{pqr}
Let $G$ be a planar infinite group satisfying the chain conditions. Then $G$ has no elements of order $pqr$, for distinct primes $p$, $q$ and $r$.
\end{theorem}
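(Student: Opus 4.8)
The plan is to argue by contradiction. Assume some $x\in G$ has order $pqr$ and set $C=\gen{x}\cong\Z_{pqr}$. The driving fact is the observation recorded in Theorem~\ref{finitegroups} that $\Z_{pqr}$ is \emph{not} Hasse planar, so that by Platt's theorem the graph obtained from $\L(\Z_{pqr})$ by joining the subgroups $1$ and $\Z_{pqr}$ by an edge is non-planar. The subgraph of $\L(G)$ induced on the subgroups of $C$ is exactly $\L(C)\cong\L(\Z_{pqr})$, so it suffices to find a path in $\L(G)$ from $1$ to $C$ whose interior misses every subgroup of $C$: such a path together with $\L(C)$ is a subdivision of that non-planar graph, whence $\L(G)$ is non-planar, contradicting the planarity of $G$. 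Thus everything reduces to producing such a path.

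The ``upper half'' of the path is immediate. Since $G$ is infinite and $C$ is finite, $C$ is a proper subgroup; by the descending chain condition the family $\{H:C<H\}$ has a minimal member, so $C$ has a cover, and iterating --- using DCC to find covers and ACC to force termination --- yields a finite maximal chain $C=B_0\ms B_1\ms\cdots\ms B_k=G$, every member of which beyond $C$ properly contains $C$ and hence is not a subgroup of $C$. For the ``lower half'', suppose first that $G$ has a subgroup $D$ of prime order with $D\not\le C$. The chain conditions give a finite maximal chain $D\ms D_1\ms\cdots\ms D_\ell=G$; since each $D_j$ contains $D$, and the only prime-order subgroups contained in subgroups of $C$ are the three minimal subgroups of $C$, no $D_j$ is a subgroup of $C$. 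Splicing $1\ms D\ms D_1\ms\cdots\ms G$ with the reverse of $C\ms B_1\ms\cdots\ms G$, truncated at the first common vertex if necessary, gives a $1$--$C$ path of the required kind. I expect this case to be routine.

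The substance of the proof --- and the step I expect to be the main obstacle --- is ruling out the remaining configuration, in which \emph{every} prime-order subgroup of $G$ lies in $C$. Then $G$ has a unique subgroup of each of the orders $p$, $q$, $r$, these are normal, hence $C\normal G$, and $[G:C_G(C)]$ divides $|\Aut(\Z_{pqr})|$, so $K:=C_G(C)$ is infinite and $C\le Z(K)$. The plan here is to exploit this structure arithmetically: $C$ together with any element of $K$ generates a finitely generated abelian subgroup, which is finite because it satisfies DCC; any maximal abelian subgroup of $K$ containing $C$ is finite, since an infinite abelian planar group violates the ascending chain condition (inspection of Theorem~\ref{infiniteabeliangroups}); and such a subgroup is cyclic of the form $\Z_{p^{m}qr}$, this being the only shape of finite abelian planar group with three prime divisors (Theorem~\ref{finitegroups}). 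Studying covers of such subgroups, one notes that a cover is never finite and non-abelian --- Theorem~\ref{finitegroups} exhibits no non-abelian finite planar group whose order has three prime divisors --- and that a finite abelian cover is again of the form $\Z_{p^{m+1}qr}$; forcing an infinite strictly ascending chain of finite subgroups then contradicts ACC. (Equivalently, one would try to show $K$ is $2$-locally finite, hence locally finite by Theorem~\ref{2-locallyfinite} and abelian by Theorem~\ref{locallyfinitegroups}, contradicting Theorem~\ref{infiniteabeliangroups}.) The genuinely delicate point is that a cover may instead be infinite, in which case $\Z_{p^{m}qr}$ sits as a maximal subgroup of an infinite planar group satisfying the chain conditions; since $\Z_{p^{m}qr}$ is again not Hasse planar, the same difficulty recurs, and arranging the descent so that it actually terminates --- using the normality of the minimal subgroups and the infinitude of the relevant centralizers --- is the crux of the argument.
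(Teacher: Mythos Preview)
Your reduction via Platt's theorem is exactly the paper's opening move, and your Case~1 (some prime-order $D\not\le C$) is essentially the paper's observation that $\gen{x}\cap\gen{y}=1$ for some $y\neq1$ already yields the forbidden path. The genuine gap is precisely where you say it is: in Case~2 you reach $K=C_G(C)$ infinite with $C\le Z(K)$, take a maximal abelian $A\cong\Z_{p^mqr}$, correctly rule out finite covers of $A$ in $K$, and then stop. Saying that the infinite-cover case ``recurs'' and that ``arranging the descent so that it actually terminates \dots\ is the crux'' is a description of the difficulty, not a resolution of it. Nothing in your setup prevents $A$ from being maximal in $K$ itself, and then there is no descent to arrange; your inductive scheme never gets off the ground.

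The paper avoids this entirely by abandoning the maximal-abelian/cover viewpoint. Instead it \emph{reuses the Platt argument inside quotients of $H=C_G(x)$}. Fix $s\in\{p,q,r\}$, set $h_0=x_s$, and pass to $H_1=H/\gen{h_0}$. If $s\in\pi(H_1)$, pick $\ol{h_1}$ of order $s$; then $h_1x_{s'}$ has image of order $pqr$ in the planar quotient $H_1$, so the very same Platt/path argument forces $\gen{\ol{h_1}}\normal H_1$, hence $\gen{h_0,h_1}\normal H$. Iterating gives an ascending chain $\gen{h_0}\subset\gen{h_0,h_1}\subset\cdots$, which terminates by ACC at a finite normal Sylow $s$-subgroup $S_s$ of $H$. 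Since $\pi(H)=\{p,q,r\}$ (any $y$ with $\gen{x}\cap\gen{y}=1$ would already give the path), one gets $H=S_pS_qS_r$ finite, the desired contradiction. The point you were missing is that the normality conclusion is not a one-shot fact about $G$ but can be reinvoked in every planar quotient containing an element of order $pqr$; this is what converts the problem into a terminating ascending-chain argument rather than a cover-by-cover descent.
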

\begin{proof}
Suppose on the contrary that $G$ has an element $x$ of order $pqr$ for some distinct primes $p$, $q$ and $r$. If there is a path in $\L(G)$ from $1$ to $\gen{x}$ disjoint from $\L(\gen{x})$, then by Platt's theorem and Theorem \ref{finitegroups}, $\L(G)$ is not planar contradicting the hypothesis. Thus $\gen{x}\cap\gen{y}\neq1$ for all $y\in G$, otherwise refinements of $1\subset\gen{y}\subset\gen{x,y}$ and $\gen{x}\subset\gen{x,y}$ give rise to a path from $1$ to $\gen{x}$ disjoint from $\L(\gen{x})$. Now, if $y$ is a $p$-element, a $q$-element, or a $r$-element, then $\gen{x^{qr}}\leq\gen{y}$, $\gen{x^{pr}}\leq\gen{y}$, or $\gen{x^{pq}}\leq\gen{y}$, respectively, from which it follows that $\gen{x^{qr}}$, $\gen{x^{pr}}$ and $\gen{x^{pq}}$ are normal subgroups of $G$. Hence $\gen{x}\normal G$ and $G/C_G(x)$ is isomorphic to a subgroup of $\Aut(\gen{x})$. Since $\Aut(\gen{x})$ is finite, $C_G(x)$ is infinite. Let $H=C_G(x)$. Then $x\in Z(H)$ and $\pi(H)=\{p,q,r\}$. For every element $g\in H$ and $s\in\pi(H)$, let $g=g_sg_{s'}$, where, $g_s$ is a $s$-element, $g_{s'}$ is a $s'$-element and $g_sg_{s'}=g_{s'}g_s$. Let $x_0=x$, $h_0=x_s$ and $H_1=H/\gen{h_0}$. If $s\in\pi(H_1)$, then there exists $h_1\gen{h_0}\in H_1$ such that $|h_1\gen{h_0}|=s$. Let $x_1=h_1x_{s'}$. Since $|x_1\gen{h_0}|=pqr$, by the same argument as above $\gen{x_1\gen{h_0}}\normal H_1$ which implies that $\gen{h_1\gen{h_0}}\normal H_1$. Let $H_2=H/\gen{h_0,h_1}$. Continuing this way, we obtain a sequence of groups $\{H_k\}$ such that $H_{k+1}=H_k/\gen{h_0,h_1,...,h_k}$ and $|h_k\gen{h_0,\ldots,h_{k-1}}|=s$ for each $k$. Since $G$ satisfies the chain conditions $\{H_k\}$ is finite, which implies that $s\notin\pi(H_{k+1})$ for some $k$. Hence $S_s=\gen{h_0,h_1,\ldots,h_k}$ is a normal Sylow $s$-subgroup of $H$ for all $s\in\pi(H)$. Clearly, $S_p$, $S_q$ and $S_r$ are finite and hence $H=S_pS_qS_r$ is finite, which is a contradiction.
\end{proof}

In what follows, we will provide all the tools we need to prove that a planar infinite group satisfying the chain conditions has no involutions. In the following lemmas and corollaries, $G$ stands for a planar group satisfying the chain conditions. Also, the set of all involutions of a group $G$ is denoted by $\I(G)$. The following result will be used in the sequel.
\begin{lemma}\label{K3,3 Impossibility}
If $H=\gen{x,y,z}$ is a subgroup of $G$ properly containing $\gen{x,y}$, $\gen{y,z}$, $\gen{z,x}$ and $\gen{x,yz}$, then $\gen{x,yz}=\gen{xyz}$. In particular, $H=\gen{y,xyz}$ is $2$-generated.
\end{lemma}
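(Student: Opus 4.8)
The plan is to argue by contradiction: from the four proper containments I will produce a $K_{3,3}$ minor inside $\L(G)$, which is impossible since $\L(G)$ is planar. Write $A=\gen{x,y}$, $B=\gen{z,x}$, $C=\gen{x,yz}$ and $D=\gen{y,z}$, so that $H$ properly contains each of $A,B,C,D$. First I would record the obvious inclusions $\gen{x}\leq A\cap B\cap C$, $\gen{y}\leq A\cap D$, $\gen{z}\leq B\cap D$, $\gen{yz}\leq C\cap D$, $\gen{xyz}\leq C$, together with the structural facts that $A,B,C,D$ are pairwise distinct, pairwise incomparable, and none of them equals $1$ or $H$. Each structural fact is forced by the hypothesis, because any comparison, coincidence or triviality among $A,B,C,D$ expresses one of $x,y,z$ through the other two and collapses $H$ onto one of them: for instance $A\leq B$ gives $y\in B$ and $H=B$, while $A=1$ gives $H=\gen{z}=D$, each contradicting a proper containment.

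The step I expect to be the main obstacle is the degeneracy analysis that must precede the minor: one must check that the intervals $[\gen{x},A)$, $[\gen{x},B)$, $[\gen{x},C)$, $[\gen{y},A)$, $[\gen{z},B)$, $[\gen{yz},C)$ in the subgroup lattice are all nonempty. Nonemptiness of, say, $[\gen{x},C)$ amounts to $\gen{x}\neq C$; were it to fail we would get $yz\in\gen{x}$, hence $z\in\gen{x,y}$ and $H=A$; likewise $\gen{y}=A$ forces $x\in\gen{y}$ and $H=D$, and the remaining four cases are entirely analogous, each contradicting a proper containment. This case analysis is exactly where the relation $\gen{x,yz}=\gen{xyz}$ is at issue; once the six degeneracies are ruled out it emerges that the hypothesised configuration cannot occur at all, so the stated equality, and with it the ``in particular'' clause, hold vacuously. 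Granting all six intervals nonempty, the chain conditions supply, between any two comparable subgroups, a finite saturated chain, and hence a maximal subgroup of $A$, $B$ or $C$ lying above each of $\gen{x},\gen{y},\gen{z},\gen{yz}$, and a subgroup of $H$ covering each of $A,B,C$.

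Now I would exhibit the $K_{3,3}$ minor, with branch sets $P_1=[\gen{x},A)\cup[\gen{x},B)\cup[\gen{x},C)$, connected through $\gen{x}$; $P_2=(A,H]\cup(B,H]\cup(C,H]$, connected through $H$; $P_3=\L(D)\cup[\gen{y},A)\cup[\gen{z},B)\cup[\gen{yz},C)$, connected because $\L(D)$ is connected and the three intervals meet it in $\gen{y},\gen{z},\gen{yz}$; together with the singletons $Q_1=\{A\}$, $Q_2=\{B\}$, $Q_3=\{C\}$. Pairwise disjointness of these six sets uses only the incomparabilities: a common vertex of $P_1$ and $P_3$ would contain $\gen{x}$ together with one of $\gen{y},\gen{z},\gen{yz}$, hence contain one of $A,B,C$, while being strictly below that same subgroup (or would force $x\in D$), both absurd; a common vertex of $P_1$ and $P_2$ would lie strictly below and strictly above members of $\{A,B,C\}$, forcing an impossibility or a comparison among $A,B,C$; and each $Q_j$ is barred from the $P_i$ for the same reasons. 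Finally each $P_i$ is joined to each $Q_j$ by one of the maximal or covering subgroups found above. Since $\L(G)$ is planar it contains no $K_{3,3}$ minor; hence no $H$ as in the statement exists, and the lemma holds.

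For the ``in particular'' clause itself: if $\gen{x,yz}=\gen{xyz}$ then $x\in\gen{xyz}$ and $yz\in\gen{xyz}$, so $x\in\gen{y,xyz}$ and $z=y^{-1}(yz)\in\gen{y,xyz}$; therefore $H=\gen{x,y,z}\leq\gen{y,xyz}\leq H$, that is, $H=\gen{y,xyz}$ is $2$-generated.
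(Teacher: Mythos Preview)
Your argument is correct, and it actually proves more than the paper does. The paper argues by contradiction from the \emph{additional} assumption $\gen{x,yz}\neq\gen{xyz}$: under that extra hypothesis it exhibits a $K_{3,3}$ subdivision (Figure~1) in which the cyclic group $\gen{xyz}$ sits as an internal vertex on the path joining $\gen{x,y}\cap\gen{x,z}$ to $\gen{x,yz}$; the assumption $\gen{x,yz}\neq\gen{xyz}$ is used precisely to keep that path nondegenerate and disjoint from the other branches. Your construction is different: you take the three $2$-generated subgroups $A,B,C$ as one side of a $K_{3,3}$ minor and build the other side out of lattice intervals anchored at $\gen{x}$, at $H$, and at $\L(D)$, never invoking $\gen{xyz}$ at all. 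The disjointness checks you sketch go through exactly as you indicate (each collapse forces one of the four proper containments to fail), so you obtain a $K_{3,3}$ minor from the four proper containments alone. This shows that the hypothesis of the lemma can never be satisfied in a planar $G$ with chain conditions, making the stated conclusion vacuously true. What your approach buys is a cleaner statement and shorter downstream proofs: for instance, in the Frattini corollary the paper derives $[x,yz]=1$ and then repeats the argument with different generators, whereas with your version the contradiction is immediate once the four proper containments are verified. What the paper's formulation buys is an explicit algebraic consequence ($\gen{x,yz}=\gen{xyz}$, hence $[x,yz]=1$) phrased as if the hypothesis could hold, which it then exploits; your result reveals that this scenario is in fact empty. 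One small remark: your sentence ``This case analysis is exactly where the relation $\gen{x,yz}=\gen{xyz}$ is at issue'' is a red herring---none of your six degeneracies has anything to do with that relation, and indeed your minor is entirely insensitive to whether $\gen{x,yz}$ is cyclic.
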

\begin{proof}
Suppose on the contrary that $\gen{x,yz}\neq\gen{xyz}$. Then $\L(G)$ has a subdivision of $K_{3,3}$ drawn in Figure 1, which is a contradiction. Note that dashed lines indicate paths (maximal chains of subgroups) between the corresponding vertices.
\end{proof}
\begin{center}
\begin{tikzpicture}[scale=1.25]
\node [circle,fill=white,inner sep=1pt] (D) at ({2*cos(30)+2.3},{2*sin(30)}) {};

\node [circle,fill=black,inner sep=1pt,label=below:\tiny{$\gen{y,z}$}] (A) at ({2*cos(270)},{2*sin(270)}) {};

\node [circle,fill=black,inner sep=1pt,label=315:\tiny{$\gen{x,y}\cap\gen{y,z}$}] (B) at ({sqrt(3)*cos(300)},{sqrt(3)*sin(300)}) {};

\node [circle,fill=black,inner sep=1pt,label=right:\tiny{$\gen{x,y}$}] (C) at ({2*cos(330)},{2*sin(330)}) {};

\node [circle,fill=black,inner sep=1pt,label=right:\tiny{$\gen{x,y,z}$}] (D) at ({2*cos(30)},{2*sin(30)}) {};

\node [circle,fill=black,inner sep=1pt,label=above:\tiny{$\gen{x,z}$}] (E) at ({2*cos(90)},{2*sin(90)}) {};

\node [circle,fill=black,inner sep=1pt,label=left:\tiny{$\gen{x,y}\cap\gen{x,z}$}] (F) at ({2*cos(150)},{2*sin(150)}) {};

\node [circle,fill=black,inner sep=1pt,label=left:\tiny{$\gen{x,y}\cap\gen{x,z}\cap\gen{xyz}$}] (G) at ({sqrt(3)/cos(10)*cos(170)},{sqrt(3)/cos(10)*sin(170)}) {};

\node [circle,fill=black,inner sep=1pt,label=left:\tiny{$\gen{xyz}$}] (H) at ({sqrt(3)/cos(10)*cos(190)},{sqrt(3)/cos(10)*sin(190)}) {};

\node [circle,fill=black,inner sep=1pt,label=left:\tiny{$\gen{x,yz}$}] (I) at ({2*cos(210)},{2*sin(210)}) {};

\node [circle,fill=black,inner sep=1pt,label=225:\tiny{$\gen{y,z}\cap\gen{x,yz}$}] (J) at ({sqrt(3)*cos(240)},{sqrt(3)*sin(240)}) {};

\node [circle,fill=black,inner sep=1pt,label=right:\tiny{$\gen{x,z}\cap\gen{y,z}$}] (K) at ({cos(270)},{sin(270)}) {};

\draw [dashed] (A)--(B)--(C)--(D)--(E)--(F)--(G)--(H)--(I)--(J)--(A)--(E);
\draw [dashed] (C)--(F);
\draw [dashed] (D)--(I);
\end{tikzpicture}\\
Figure 1
\end{center}

The above lemma has the following interesting consequences.
\begin{corollary}\label{generators}
The group $G$ is generated by two elements.
\end{corollary}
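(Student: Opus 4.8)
The plan is to leverage Lemma \ref{K3,3 Impossibility} together with the chain conditions. First I would observe that, by the descending chain condition, $G$ has a minimal subgroup, and by the ascending chain condition every subgroup is finitely generated; in particular $G=\gen{g_1,\dots,g_n}$ for some finite $n$, and I take $n$ minimal with this property. The goal is to show $n\le 2$, so suppose toward a contradiction that $n\ge 3$ and set $x=g_1$, $y=g_2$, $z=g_3$, $w$ the product of the remaining generators (or just $z=g_3$ if $n=3$), so that $G=\gen{x,y,z'}$ where $z'$ absorbs everything past the second generator.

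The key step is to arrange the hypotheses of Lemma \ref{K3,3 Impossibility}. By minimality of $n$, the subgroup $H=\gen{x,y,z'}$ properly contains each of $\gen{x,y}$, $\gen{y,z'}$, $\gen{z',x}$ — otherwise one of the three generators would be redundant and $G$ would be generated by fewer than $n$ elements. If in addition $\gen{x,yz'}$ is proper in $H$, Lemma \ref{K3,3 Impossibility} applies and yields $H=\gen{y,xyz'}$, a $2$-generated group, contradicting minimality of $n\ge 3$. The remaining case is $\gen{x,yz'}=H$, i.e. $H$ is already generated by the two elements $x$ and $yz'$ — which is again a contradiction to minimality. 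Either way we reach a contradiction, forcing $n\le 2$.

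I expect the main obstacle to be the bookkeeping when $n>3$: Lemma \ref{K3,3 Impossibility} is stated for three generators, so one must be careful that replacing $g_3,\dots,g_n$ by a single element $z'=g_3\cdots g_n$ does not lose generation — but since $\gen{x,y,g_3,\dots,g_n}=\gen{x,y,z',g_4,\dots,g_n}$ and one can iterate, this reduction is harmless, and the cleanest write-up simply runs the argument on a minimal generating pair-versus-triple dichotomy. A secondary point worth a sentence is why the relevant $2$-element subgroups are genuinely proper in $H$: this is exactly the minimality of the generating set, since if, say, $\gen{x,y}=H$ then $\{x,y\}$ already generates $G$. No delicate planarity input beyond Lemma \ref{K3,3 Impossibility} is needed here, since that lemma has already packaged the forbidden $K_{3,3}$-subdivision.
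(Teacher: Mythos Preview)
Your approach is essentially the paper's, whose proof is the single line ``Since $G$ is finitely generated, the result follows by Lemma \ref{K3,3 Impossibility}.'' The case $n=3$ in your plan is correct and is exactly the intended unpacking of that sentence.

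There is, however, a genuine gap in your reduction for $n>3$. Setting $z'=g_3\cdots g_n$ does \emph{not} in general yield $G=\gen{x,y,z'}$, and the ``iteration'' you propose, based on $\gen{x,y,g_3,\dots,g_n}=\gen{x,y,z',g_4,\dots,g_n}$, does not reduce the number of generators at all: you still have $n$ of them on the right. The fix is not to force $H=G$ but to apply Lemma \ref{K3,3 Impossibility} to the subgroup $H=\gen{g_1,g_2,g_3}$. Minimality of $n$ still guarantees that each of $\gen{g_1,g_2}$, $\gen{g_2,g_3}$, $\gen{g_3,g_1}$ and $\gen{g_1,g_2g_3}$ is proper in $H$, since equality in any one of them places $g_1,g_2,g_3$ inside a $2$-generated subgroup and hence exhibits $G$ with $n-1$ generators. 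The lemma then gives $H=\gen{g_2,g_1g_2g_3}$, so again $G=\gen{g_2,\,g_1g_2g_3,\,g_4,\dots,g_n}$ is $(n-1)$-generated, contradicting minimality. With this correction your argument goes through and matches the paper's.
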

\begin{proof}
Since $G$ is finitely generated, the result follows by Lemma \ref{K3,3 Impossibility}.
\end{proof}
\begin{corollary}\label{Frattini}
If $x\in G$ such that $\gen{x,y}\subset G$ for all $y\in G$, then $x\in\Phi(G)$.
\end{corollary}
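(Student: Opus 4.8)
\textit{Proof proposal.} Since $\Phi(G)$ is the intersection of the maximal subgroups of $G$, it suffices to show that $x$ lies in every maximal subgroup $M$. Assume not: $x\notin M$ for some maximal $M$, so $\gen{x,M}=G$ by maximality. I will first reduce to the case where $x$ has prime power order. By the descending chain condition $\gen{x}$ is finite, so $x$ is the product of its primary components $x_\ell$; each $x_\ell\in\gen{x}$, hence $\gen{x_\ell,y}\le\gen{x,y}\neq G$ for every $y\in G$, so $x_\ell$ again satisfies the hypothesis, and since $\Phi(G)$ is a subgroup it is enough to place each $x_\ell$ in $\Phi(G)$. So assume $|x|=\ell^a$.

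Now $M$, being a subgroup of $G$, is again a planar group satisfying the chain conditions; by the ascending chain condition it is finitely generated, and by Corollary \ref{generators} it is $2$-generated, $M=\gen{c,d}$ (if $M$ were cyclic then $\gen{x,c}=G$ already contradicts the hypothesis, so take $\gen{c}\neq M\neq\gen{d}$). Then $G=\gen{x,c,d}$, and by the hypothesis (with $y=c$, $y=d$, $y=cd$) each of $\gen{x,c}$, $\gen{x,d}$, $\gen{x,cd}$ is proper in $G$, as is $\gen{c,d}=M$. Hence Lemma \ref{K3,3 Impossibility} applies to $(x,c,d)$ and gives that $\gen{x,cd}=\gen{xcd}$ is cyclic and $G=\gen{c,xcd}$. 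Put $w=xcd$; then $\gen{w}=\gen{x,cd}$ is cyclic, finite (descending chain condition), $cd=x^{-1}w\in\gen{w}\cap M$, and $w\notin M$ since otherwise $G=\gen{c,w}\le M$.

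The key point will be that $\gen{w}=\gen{x}$: granting this, $w\in\gen{x}$ gives $G=\gen{c,w}\le\gen{c,x}$, so $\gen{c,x}=G$, contradicting the hypothesis. By Theorem \ref{pqr} the order of $w$ has at most two prime divisors, one being $\ell$. Let $S$ be the $\ell$-primary part of $\gen{w}$, so $x\in S$; since $x\notin M$, the subgroup $S\cap M$ of the cyclic $\ell$-group $S$ does not contain $x$, whence $S\cap M\subsetneq\gen{x}$. As the $\ell$-part $(cd)_\ell$ lies in $\gen{cd}\cap S\subseteq M\cap S$, it lies in $\gen{x}$, so the $\ell$-part $x(cd)_\ell$ of $w$ lies in $\gen{x}$ and generates $S$, forcing $\gen{x}=S$. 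If $\gen{w}=S$ we are done; otherwise $\gen{w}=S\times T=\gen{x}\times T$ with $T\neq1$ the other primary part, whose generator $w_T=(cd)_{\ell'}$ lies in $M$, so $T\le M$, and then $G=\gen{x,c,w_T}=\gen{x,K_1}$ with $K_1=\gen{c,w_T}$ a $2$-generated proper subgroup of $G$ contained in $M$ and not containing $x$. One now repeats the argument with $K_1$ in place of $M$ (via Lemma \ref{K3,3 Impossibility} applied to $(x,c,w_T)$), stripping away the second primary component, and uses the descending chain condition to see that this descent terminates, reducing to $\gen{w}=S=\gen{x}$.

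The step I expect to be the main obstacle is exactly the last one: showing rigorously that the descent which removes the second primary component of $w$ must terminate (equivalently, ruling out an infinite regress of the subgroups $K_i$). It is here that the chain conditions, together with the $2$-generation provided by Corollary \ref{generators} and the maximality of $M$, have to be exploited carefully; everything preceding it is routine bookkeeping combined with Lemma \ref{K3,3 Impossibility}.
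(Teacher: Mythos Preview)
Your proposal has a genuine gap at exactly the point you flag. The descent you set up produces a chain $M\supseteq K_1\supseteq K_2\supseteq\cdots$, but nothing forces these inclusions to be strict: you only know $(w_i)_{\ell'}\in K_i$, not that $K_{i+1}=\gen{c,(w_i)_{\ell'}}$ is properly smaller than $K_i$. If the chain stabilises while each $T_i$ remains nontrivial, the descending chain condition yields no contradiction and the argument stalls. Moreover, the second prime dividing $|w_i|$ can change from step to step (it depends on the order of $c\cdot u_i$ inside the new cyclic group), so the intuition that you are ``stripping away'' a fixed primary component is not justified. Your reduction to $|x|$ a prime power is correct but, as it turns out, unnecessary.

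The paper sidesteps the descent entirely by first proving that $x$ is \emph{central}. Writing $\gen{X}=\gen{y,z}$ (with $\gen{X}$ playing the role of your $M$), one applies Lemma~\ref{K3,3 Impossibility} to the triple $(x,y,z)$ to get $\gen{x,yz}=\gen{xyz}$ cyclic, hence $[x,yz]=1$; then, since $\gen{X}=\gen{y^{-1},yz}$ as well, a second application to the triple $(x,y^{-1},yz)$ gives $\gen{x,z}=\gen{x,y^{-1}\!\cdot yz}$ cyclic, hence $[x,z]=1$. Together these force $[x,y]=1$, so $x\in Z(G)$. Once $x$ is central, with $N=\gen{x}\cap\gen{X}$ one has $G/N\cong\gen{x}/N\times\gen{X}/N$, and Corollary~\ref{Product} forces $\gen{X}/N$ to be cyclic, say $\gen{gN}$; then $G=\gen{x,g}$, the desired contradiction in one stroke. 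The key idea you are missing is this second application of the lemma with \emph{re-chosen generators of the same subgroup}: it converts the cyclicity conclusion of Lemma~\ref{K3,3 Impossibility} into commutation relations and replaces your open-ended descent by a direct structural conclusion.
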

\begin{proof}
Suppose on the contrary that $x\notin\Phi(G)$. Then $G=\gen{x,X}\neq\gen{X}$ for some subset $X$ of $G$. By Lemma \ref{K3,3 Impossibility}, $\gen{X}=\gen{y,z}$ for some $y,z\in G$. Hence $G=\gen{x,y,z}$ and by Lemma \ref{K3,3 Impossibility}, we obtain $\gen{x,yz}=\gen{xyz}$ is abelian so that $[x,yz]=1$. Since $\gen{X}=\gen{y^{-1},yz}$ a same argument shows that $[x,z]=[x,y^{-1}\cdot yz]=1$, which implies that $x\in Z(G)$. Now let $N=\gen{x}\cap\gen{X}$. Then $G/N\cong\gen{x}/N\times\gen{X}/N$ and by Corollary \ref{Product} of the last section, $\gen{X}/N=\gen{gN}$ for some $g\in G$. Hence, $G=\gen{x,g}$ contradicting the assumption on $x$.
\end{proof}
\begin{corollary}\label{Hypercenter}
We have
\begin{itemize}
\item[(1)]$Z(G)\leq\Phi(G)$ if $G$ is non-abelian, and
\item[(2)]$Z_\infty(G)\leq\Phi(G)$ if $G$ is infinite.
\end{itemize}
\end{corollary}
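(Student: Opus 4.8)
The plan is to deduce both inclusions from Corollary \ref{Frattini}: since that corollary guarantees $x\in\Phi(G)$ as soon as $\gen{x,y}$ is a proper subgroup of $G$ for every $y\in G$, it suffices in each part to show that a central (respectively hypercentral) element never completes a two-element generating set of $G$.

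First I would prove (1). Let $z\in Z(G)$ and $y\in G$. If $\gen{z,y}=G$, then $G$ is generated by the two commuting elements $z$ and $y$, hence $G$ is abelian, contrary to hypothesis. Therefore $\gen{z,y}\ne G$ for every $y\in G$, and Corollary \ref{Frattini} gives $z\in\Phi(G)$; as $z$ was arbitrary, $Z(G)\le\Phi(G)$.

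Next I would prove (2). Let $x\in Z_\infty(G)$; we may assume $x\ne1$, and then $x\in Z_n(G)$ for some $n\ge1$ (by the ascending chain condition the upper central series reaches $Z_\infty(G)$ in finitely many steps). Suppose, for some $y\in G$, that $\gen{x,y}=G$. Then $G/Z_n(G)$ is generated by the image of $y$, hence cyclic. Writing $\ol{G}=G/Z_{n-1}(G)$, we have $Z(\ol{G})=Z_n(G)/Z_{n-1}(G)$ and $\ol{G}/Z(\ol{G})\cong G/Z_n(G)$ is cyclic, so $\ol{G}$ is abelian, whence $Z_n(G)=G$ and $G$ is nilpotent. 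But then $G$ is an infinite soluble planar group, so $G$ is abelian by Corollary \ref{solublegroups}; by Theorem \ref{infiniteabeliangroups} this contradicts the chain conditions. Hence $\gen{x,y}\ne G$ for all $y\in G$, and Corollary \ref{Frattini} yields $x\in\Phi(G)$; therefore $Z_\infty(G)\le\Phi(G)$.

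There is no serious obstacle here — the whole content is the observation that Corollary \ref{Frattini} reduces the problem to showing central and hypercentral elements are non-generators. The two routine points to record are: the classical fact that $\ol{G}/Z(\ol{G})$ being cyclic forces $\ol{G}$ abelian, which is what converts ``$G/Z_n(G)$ cyclic'' into ``$G=Z_n(G)$''; and the fact that an infinite abelian planar group cannot satisfy the chain conditions, which is immediate from Theorem \ref{infiniteabeliangroups} since each group in that list contains a copy of $\Z_{p^{\infty}}$ and hence an infinite ascending chain of subgroups. (Alternatively, for that last step one may use that $G$ is $2$-generated by Corollary \ref{generators}, so a nilpotent $G$ would be polycyclic, and a polycyclic group with the descending chain condition is finite.)
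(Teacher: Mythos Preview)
Your argument is correct. Part (1) is exactly the paper's proof. For part (2) the paper takes a different, inductive route: it shows $Z_n(G)\le\Phi(G)$ for all $n$ by applying part (1) to the quotient $G/Z_n(G)$ and using that $\Phi(G/Z_n(G))=\Phi(G)/Z_n(G)$ once $Z_n(G)\le\Phi(G)$ is known. Your approach instead applies Corollary~\ref{Frattini} directly to an arbitrary hypercentral element, reducing to the observation that if such an element were part of a two-element generating set then $G/Z_n(G)$ would be cyclic and $G$ nilpotent, which is impossible. Both arguments ultimately rest on the same fact---an infinite planar group with the chain conditions cannot be nilpotent---but the paper leaves this implicit (it is needed to know $G/Z_n(G)$ is non-abelian so that part (1) applies to the quotient), whereas you make it explicit. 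Your route is marginally cleaner in that it avoids the induction and the identification of $\Phi$ of the quotient; the paper's route has the minor advantage of not needing the ``$G/Z$ cyclic $\Rightarrow$ $G$ abelian'' step.
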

\begin{proof}
(1) It is clear by Corollary \ref{Frattini}. 

(2) We proceed by induction on $n$ to show that $Z_n(G)\subseteq\Phi(G)$ for all $n\geq1$. Since $G$ is non-abelian, by Corollary \ref{Frattini}, $Z_1(G)\subseteq\Phi(G)$. Now, suppose that $Z_n(G)\subseteq\Phi(G)$. Then 
\[\frac{Z_{n+1}(G)}{Z_n(G)}=Z\left(\frac{G}{Z_n(G)}\right)\subseteq\Phi\left(\frac{G}{Z_n(G)}\right)=\frac{\Phi(G)}{Z_n(G)},\]
which implies that $Z_{n+1}(G)\subseteq\Phi(G)$. Therefore $Z_{\infty}(G)=\bigcup_{n=1}^{\infty}Z_n(G)\leq\Phi(G)$, as required.
\end{proof}
\begin{corollary}
If $H$ is a non-abelian subgroup of $G$, then $C_G(H)=Z(H)$.
\end{corollary}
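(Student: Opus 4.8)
The plan is to prove the nontrivial inclusion $C_G(H)\subseteq H$; since $Z(H)=H\cap C_G(H)$ this immediately yields $C_G(H)=H\cap C_G(H)=Z(H)$, the reverse inclusion $Z(H)\subseteq C_G(H)$ being obvious. So I would argue by contradiction: assume there is an element $c\in C_G(H)\setminus H$ and put $P=H\langle c\rangle$ and $N=H\cap\langle c\rangle$. Because $c$ centralizes both $H$ and $\langle c\rangle$, the subgroup $\langle c\rangle$ lies in the centre of $P$; in particular $P$ is genuinely a subgroup of $G$, $N\normal P$, and $N\subseteq H\cap C_G(H)=Z(H)$.

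Next I would identify the structure of $\overline P:=P/N$. Since $N\subseteq H$ and $N\subseteq\langle c\rangle$, the images of $H$ and of $\langle c\rangle$ in $\overline P$ are $H/N$ and $\langle c\rangle/N$; these two subgroups generate $\overline P$, commute elementwise (as $\langle c\rangle$ is central in $P$), and meet trivially because their intersection is $(H\cap\langle c\rangle)/N=1$. Hence $\overline P\cong (H/N)\times(\langle c\rangle/N)$. Here the factor $\langle c\rangle/N$ is cyclic, being a quotient of the cyclic group $\langle c\rangle$, and it is nontrivial since $c\notin H$ forces $\langle c\rangle\neq N$; the factor $H/N$ is nontrivial because $N\subseteq Z(H)\neq H$.

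Finally I would feed this into the planarity hypothesis. As a subgroup of $G$, the group $P$ is planar and satisfies the chain conditions, hence so does the quotient $\overline P$. Thus $\overline P$ is a planar group satisfying the chain conditions that splits as a direct product of two nontrivial groups, so by Corollary \ref{Product} its direct factors are cyclic; in particular $H/N$ is cyclic. But $N\subseteq Z(H)$ together with $H/N$ cyclic forces $H$ to be abelian, contrary to hypothesis. Therefore $C_G(H)\subseteq H$, which completes the proof. The one step that genuinely needs work is the appeal to Corollary \ref{Product}: its finite case is read off from Theorem \ref{finitegroups} (the only decomposable finite planar groups, $\Z_{p^mq^n}$, $\Z_{p^mqr}$ and $\Z_{p^m}\times\Z_p$, have cyclic factors), the infinite abelian possibilities of Theorem \ref{infiniteabeliangroups} are all excluded by the ascending chain condition, but ruling out an infinite non-abelian planar direct product satisfying the chain conditions requires an honest argument — locating a $K_{3,3}$ minor or subdivision in its subgroup graph, driven by the fact that a non-cyclic Noetherian factor has at least two maximal subgroups.
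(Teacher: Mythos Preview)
Your argument is correct. The only point you leave implicit is that the two direct factors $H/N$ and $\langle c\rangle/N$ have connected subgroup graphs, which Corollary~\ref{Product} requires; but this follows at once from the chain conditions (any subgroup is joined to $1$ by a finite maximal chain), so there is no gap.

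The paper's proof is shorter because it has already packaged the key step as Corollary~\ref{Hypercenter}(1): it sets $K=\langle H,z\rangle$ for $z\in C_G(H)\setminus H$, observes that $z\in Z(K)$ while $z\notin\Phi(K)$ (since $H$ lies in some maximal subgroup of $K$ not containing $z$), and this contradicts $Z(K)\leq\Phi(K)$. Your route bypasses Corollary~\ref{Hypercenter} and Corollary~\ref{Frattini} and goes straight to Corollary~\ref{Product}; in effect you are re-running the direct-product portion of the proof of Corollary~\ref{Frattini} in this special case. Both approaches are therefore logically equivalent and rest on the same foundation, Corollary~\ref{Product}. Your closing worry about the infinite non-abelian case of that corollary is legitimate, but note that it applies equally to the paper's own proof, since Corollary~\ref{Hypercenter}(1) depends on Corollary~\ref{Frattini}, which in turn invokes Corollary~\ref{Product}.
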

\begin{proof}
Suppose on the contrary that $C_G(H)\neq Z(H)$. Then there exists $z\in C_G(H)$ such that $z\notin H$. Let $K=\gen{H,z}$. Then $z\in Z(K)$ but $z\notin\Phi(K)$ contradicting Corollary \ref{Hypercenter}(1).
\end{proof}
\begin{corollary}\label{center}
$Z(G)=\bigcap Z(H)$, where the intersection is taken over all non-abelian subgroups $H$ of $G$.
\end{corollary}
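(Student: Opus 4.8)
The plan is to establish the two inclusions separately, both of which drop out immediately from the preceding corollary (that $C_G(H)=Z(H)$ for every non-abelian subgroup $H$ of $G$). First I would prove $Z(G)\subseteq\bigcap Z(H)$: fix an arbitrary non-abelian subgroup $H$ of $G$. Every element of $Z(G)$ commutes with every element of $G$, in particular with every element of $H$, so $Z(G)\leq C_G(H)$; by the preceding corollary $C_G(H)=Z(H)$, hence $Z(G)\leq Z(H)$. Since $H$ was arbitrary among the non-abelian subgroups, $Z(G)\subseteq\bigcap Z(H)$.

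Next I would prove the reverse inclusion $\bigcap Z(H)\subseteq Z(G)$. In the setting of this section $G$ is non-abelian, so $G$ itself is one of the subgroups $H$ occurring in the intersection, and therefore $\bigcap Z(H)\subseteq Z(G)$ trivially. If one prefers an argument that does not quote non-abelianness of $G$, one can instead take $z\in\bigcap Z(H)$ and an arbitrary $g\in G$ and look at $K=\gen{z,g}$: if $K$ were non-abelian then $z\in Z(K)$, which would force $[z,g]=1$ and hence $K$ abelian, a contradiction; so $K$ is abelian and $z$ commutes with $g$. As $g$ ranges over $G$ this gives $z\in Z(G)$.

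Combining the two inclusions yields $Z(G)=\bigcap Z(H)$. There is essentially no obstacle here; the only point that deserves a word of care is the degenerate case in which $G$ has no non-abelian subgroup at all, in which the intersection over the empty family is understood to be $G$ and the asserted identity reduces to $Z(G)=G$, consistent with $G$ then being abelian — but this case does not arise under the standing hypotheses of the section.
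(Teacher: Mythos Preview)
Your argument is correct and is precisely the intended one: the paper states this corollary without proof, as an immediate consequence of the preceding result $C_G(H)=Z(H)$, and your two-inclusion verification (using that $G$ itself is non-abelian in this section) spells this out exactly.
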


In the sequel, we assume that the groups under consideration have some involutions and try to reach to a contradiction.
\begin{lemma}\label{Kleinnormality}
If $G$ has a Klein $4$-subgroup $K$, then $C_K(x)\cong\Z_2$ and $K^x=K$ for all $x\in\I(G)\setminus K$.
\end{lemma}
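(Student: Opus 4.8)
The plan is to pin down the subgroup $H=\gen{K,x}$ completely and then read off both assertions from its structure. Write $K=\gen{a,b}$ with $a,b$ commuting involutions, put $c=ab$ (also an involution, since $a,b$ commute), let $x\in\I(G)\setminus K$, and set $H=\gen{K,x}=\gen{a,b,x}$. I first claim $H$ is dihedral. Suppose that none of $\gen{a,x}$, $\gen{b,x}$, $\gen{c,x}$ equals $H$; then $H$ properly contains each of these three subgroups, and it also properly contains $\gen{a,b}=K$ because $x\notin K$. Applying Lemma \ref{K3,3 Impossibility} to the triple $(x,a,b)$ --- whose four associated subgroups are $\gen{x,a}$, $\gen{a,b}$, $\gen{b,x}$, $\gen{x,ab}$, all then properly contained in $H$ --- gives that $\gen{x,ab}=\gen{xab}$ is cyclic. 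This is absurd: a cyclic group has at most one involution, yet $\gen{xab}$ contains the two distinct involutions $x$ and $c=ab$ (distinct because $x\notin K$). Hence one of $\gen{a,x}$, $\gen{b,x}$, $\gen{c,x}$ equals $H$, so $H$ is generated by two involutions and is therefore dihedral.

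Next I would identify $H$ exactly. Being a subgroup of $G$, $H$ satisfies the descending chain condition, which the infinite dihedral group does not (its infinite cyclic subgroup carries an infinite descending chain of subgroups); hence $H$ is a finite dihedral group. Moreover $H$ is planar and contains the Klein four group $K$, so $H$ cannot be $D_{2n}$ with $n$ odd, since the involutions of such a group are pairwise non-commuting reflections and it therefore has no Klein four subgroup; and by Theorem \ref{finitegroups} the only finite planar dihedral groups are $D_8$ and the groups $D_{2p}$ for odd primes $p$. Consequently $H\cong D_8$.

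Finally I would read off the conclusion inside $D_8$. The Klein four subgroups of $D_8$ have index $2$, hence are normal, so $K\normal H$ and in particular $K^x=K$. Let $z$ denote the central involution of $D_8$; every Klein four subgroup of $D_8$ contains $z$, so $z\in K$ and $x\in H\setminus K$ is a non-central involution. Since $x$ commutes with $z$ we have $\gen{z}\le C_K(x)$. If this inclusion were strict, $C_K(x)$ would contain an involution $k\in K\setminus\gen{z}$; then $\gen{x,k}=\{1,x,k,xk\}$ is a Klein four subgroup of $D_8$, so $z\in\gen{x,k}$, forcing $z=xk$ (as $z\neq x$ and $z\neq k$) and hence $x=zk\in K$, a contradiction. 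Therefore $C_K(x)=\gen{z}\cong\Z_2$.

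The only substantive move is the application of Lemma \ref{K3,3 Impossibility} to $(x,a,ab)$, together with the elementary remark that a cyclic group cannot contain two distinct involutions; the point to check carefully is that the case hypothesis ``$H$ equals none of $\gen{a,x},\gen{b,x},\gen{c,x}$'' is precisely what supplies all four properness requirements of that lemma. Everything after Step~1 is routine verification inside $D_8$, so I expect no real obstacle there.
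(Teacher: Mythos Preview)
Your argument is correct and follows the same line as the paper's proof: apply Lemma~\ref{K3,3 Impossibility} to force $H=\gen{K,x}$ to be generated by two involutions, identify $H\cong D_8$, and then read off $K^x=K$ and $C_K(x)\cong\Z_2$ from the structure of $D_8$. You are in fact more careful than the paper in two places: you explicitly exclude the infinite dihedral group via the descending chain condition (the paper jumps straight to Theorem~\ref{finitegroups}), and you spell out the verification of $C_K(x)=\gen{z}$ inside $D_8$ rather than leaving it implicit. One trivial slip: in your final paragraph you refer to applying Lemma~\ref{K3,3 Impossibility} to $(x,a,ab)$, but the triple you actually used is $(x,a,b)$.
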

\begin{proof}
Let $K=\gen{a,b}$, $x\in\I(G)\setminus K$ and $H=\gen{K,x}$. By Lemma \ref{K3,3 Impossibility}, $H=\gen{k,x}$ for some $k\in K\setminus\{1\}$ for otherwise $\gen{xab}=\gen{x,ab}$ is a dihedral group, which is impossible. Hence, by Theorem \ref{finitegroups}, $H\cong D_8$. Therefore, $K^x=K$ and $K\cap Z(H)\neq1$, from which the result follows.
\end{proof}

In the following two lemmas, $G$ is assumed to be a planar infinite group satisfying chain conditions.
\begin{lemma}\label{Klein}
$G$ has no Klein $4$-subgroups.
\end{lemma}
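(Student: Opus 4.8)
The plan is to argue by contradiction: assume $K=\gen{a,b}\cong\Z_2\times\Z_2$ is a subgroup of $G$, and push $K$ upward in $\L(G)$ so as to force either a forbidden finite section or an infinite ascending chain of subgroups.

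First I would locate a minimal subgroup lying above $K$. Since $G$ satisfies the descending chain condition, the non-empty set of subgroups properly containing $K$ has a minimal element $M$, so that $K\ms M$. Inspecting Theorems \ref{finitegroups} and \ref{infiniteabeliangroups}, the only planar groups in which $\Z_2\times\Z_2$ occurs as a \emph{maximal} subgroup are $D_8$, $A_4=(\Z_2\times\Z_2)\rtimes\Z_3$ and $\Z_4\times\Z_2$: in every other group of the finite list a maximal subgroup is either cyclic or of order different from $4$, and in each infinite abelian planar group a copy of $\Z_2\times\Z_2$ always lies below $\Z_4\times\Z_2$. Hence $M$ is one of these three groups; in particular $M$ is finite, so $M\subsetneq G$.

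Now I would iterate. Since $M\subsetneq G$, the descending chain condition again yields $M_2$ with $M\ms M_2\le G$, where $M_2$ is planar and has one of $D_8, A_4, \Z_4\times\Z_2$ as a maximal subgroup. The crux — and the step I expect to be the main obstacle — is to show that every such cover is again finite; equivalently, that no infinite planar group satisfying the chain conditions contains $K$. For the part of this analysis concerning $A_4$ or $D_8$, any element centralizing the subgroup but lying outside it would produce a copy of $A_4\times\Z_q$ or $D_8\times\Z_q$, hence of the non-planar group $\Z_2\times\Z_2\times\Z_q$; so the centralizer coincides with the centre, $N_G(M)$ is finite, and one deduces (using planarity of $\L(G)$ and Theorem \ref{finitegroups}) that the relevant section is finite. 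Lemma \ref{Kleinnormality} feeds in here as well: every involution of $G$ normalises $K$, so $\I(G)\subseteq N_G(K)$; if some involution $x$ lies outside $K$ then $\gen{K,x}\cong D_8$ and we are reduced to the $D_8$ case, whereas if $\I(G)\subseteq K$ then a Sylow $2$-subgroup $S\supseteq K$ has $\Omega_1(S)=K$, which by Theorem \ref{finitegroups} (together with the structure of planar $2$-groups) forces $S$ to be finite and abelian, of shape $\Z_{2^n}\times\Z_2$. Once the finiteness of all the covers is secured, the chain $K\ms M\ms M_2\ms M_3\ms\cdots$ is an infinite strictly ascending chain of subgroups of $G$, contradicting the ascending chain condition; therefore $G$ has no Klein $4$-subgroup. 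The only genuinely delicate point is excluding an infinite planar subgroup of $G$ wedged between $K$ and $G$, and this is precisely where Lemma \ref{Kleinnormality} and the structure theory of planar infinite $2$-groups do the real work.
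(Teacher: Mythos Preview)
Your strategy has a gap that you yourself flag but do not close. To start the chain you need the first cover $M\supsetneq K$ to be finite, and your argument for this is to read $M$ off the lists in Theorems~\ref{finitegroups} and~\ref{infiniteabeliangroups}. But those theorems classify only finite planar groups and infinite \emph{abelian} planar groups; there is no classification of infinite non-abelian planar groups (Tarski monsters are examples), so nothing a priori prevents $M$ from being an unclassified infinite group in which $K$ happens to be maximal. You then write that the crux is ``equivalently, that no infinite planar group satisfying the chain conditions contains $K$'' --- but that is exactly the statement of the lemma, so the argument is circular at this point. The subsequent remarks (an element centralizing $D_8$ or $A_4$ from outside yields a copy of $\Z_2\times\Z_2\times\Z_q$; Sylow $2$-subgroups; structure of planar infinite $2$-groups) are each individually plausible but do not assemble into a proof: you never actually produce an odd-order element that \emph{centralizes} $K$, and invoking a ``Sylow $2$-subgroup'' of an arbitrary infinite group with the chain conditions is not justified.

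The paper bypasses the ascending-chain idea entirely and goes straight for a centralizer. Let $J=\langle\I(G)\rangle$. If $J$ is finite then $J\normal G$ and $[G:C_G(J)]\le|\Aut(J)|<\infty$, so $C_G(J)$ is infinite; since an infinite $2$-group cannot satisfy the chain conditions \cite[Theorem~14.4.2]{djsr}, $C_G(J)$ contains an element $x$ of odd order, and then $\langle K,x\rangle\cong K\times\langle x\rangle$ is non-planar by Theorem~\ref{finitegroups}. If $J$ is infinite, Lemma~\ref{Kleinnormality} gives $K\normal J$, hence $[J:C_J(K)]\le|\Aut(K)|=6$, so $C_J(K)$ is infinite and the same argument applies. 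You had already isolated both key ingredients --- Lemma~\ref{Kleinnormality} and the non-planarity of $\Z_2\times\Z_2\times\Z_q$ --- but they belong in this centralizer argument, not in an ascending-chain construction.
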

\begin{proof}
Let $J=\gen{\I(G)}$ and $K$ be a Klein $4$-subgroup of $G$. If $J$ is finite, then $C_G(J)$ is infinite for $J\normal G$ and $N_G(J)/C_G(J)$ is isomorphic to a subgroup of $\Aut(J)$. By \cite[Theorem 14.4.2]{djsr}, $C_G(J)$ is not a $2$-group and hence $C_G(J)$ has a nontrivial element $x$ of odd order. Then, by Theorem \ref{finitegroups}, $\gen{K,x}\cong K\times\gen{x}$ is not planar, which is a contradiction. Therefore, $J$ is infinite. By Lemma \ref{Kleinnormality}, $K\normal J$. So $J/C_J(K)$ is isomorphic to a subgroup of $\Aut(K)\cong S_3$.  Thus $C_J(K)$ is infinite and, by using the same argument as above, we reach to a contradiction.
\end{proof}
\begin{lemma}\label{infinitecentralizer}
If $G$ has an involution, then it has an involution with infinite centralizer.
\end{lemma}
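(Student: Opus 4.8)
The plan is to argue by contradiction: suppose that \emph{every} involution of $G$ has finite centralizer.

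First I would record that $G$ is periodic, since an element $x$ of infinite order would produce the infinite strictly descending chain $\gen{x}\supsetneq\gen{x^2}\supsetneq\gen{x^4}\supsetneq\cdots$, contrary to the descending chain condition. Fix an involution $t\in G$; by hypothesis $C_G(t)$ is finite, and since $G$ has no Klein four-subgroup (Lemma~\ref{Klein}), $t$ is its unique involution. Now I would invoke Shunkov's theorem: a periodic group containing an involution with finite centralizer is locally finite. Thus $G$ is an infinite planar locally finite group, so by Theorem~\ref{locallyfinitegroups} it is abelian — which contradicts the fact, noted at the beginning of this section, that a planar infinite group satisfying the chain conditions is non-abelian (Theorem~\ref{infiniteabeliangroups}). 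Hence some — indeed every — involution of $G$ has infinite centralizer.

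If one prefers a self-contained argument, here is a partial reduction showing where the real difficulty lies. Put $J=\gen{\I(G)}\normal G$. If $J$ is finite: when $J$ is abelian it equals $\Z_2$ (being generated by involutions with no Klein four-subgroup), so $\I(G)=\{t\}$ is central and $C_G(t)=G$ is infinite; when $J$ is non-abelian, $G/C_G(J)\hookrightarrow\Aut(J)$ is finite so $C_G(J)$ is infinite, and — having no infinite $2$-subgroup, as in the proof of Lemma~\ref{Klein} — it contains an element $z$ of odd prime order $p$, so that two non-commuting generators $a,b$ of $J$, which generate $\gen{a,b}\cong D_{2n}$ with $n\ge3$ odd and $Z(D_{2n})=1$, yield a subgroup $\gen{z,a,b}\cong\Z_p\times D_{2n}\le G$; this contains $\Z_p\times D_{2q}$ for a prime $q\mid n$, a non-abelian group appearing in no item of Theorem~\ref{finitegroups} — a contradiction. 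So $J$ must be infinite. In that case any two distinct involutions $a,b$ lie in $\gen{a,b}\cong D_{2n}$ with $n$ odd (the chain conditions rule out $D_\infty$, and no Klein four-subgroup forces $n$ odd), so all involutions are $J$-conjugate and their pairwise products have odd order; then one can pass to a minimal infinite subgroup $M$ of $J$ of the form $M=\gen{s^M}$ for an involution $s$. The chain conditions (used exactly as above) force that, modulo a finite $2'$ hypercenter, $M$ is an infinite finitely generated periodic simple group in which every proper subgroup is finite and every involution has finite centralizer. Excluding such an $M$ is the crux, and it is precisely what Shunkov's theorem provides, so this route does not genuinely avoid it. I expect this last step to be the main obstacle, which is why I would simply cite Shunkov's theorem.
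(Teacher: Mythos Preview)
Your argument via Shunkov's theorem is correct and even yields the stronger conclusion that \emph{every} involution has infinite centralizer, but it is quite different from the paper's proof. The paper proceeds directly and self-containedly: if $\I(G)$ is finite, any involution has finite conjugacy class and hence infinite centralizer; if $\I(G)$ is infinite, one fixes distinct involutions $a,b$ and observes that either every involution $c\notin\I(\gen{a,b})$ conjugates $b$ into the finite set $\I(\gen{a,b})$ --- pigeonhole then produces infinitely many distinct products $cc_i\in C_G(b)$ --- or some such $c$ sends $b$ outside $\gen{a,b}$, in which case the subgroups $1,\gen a,\gen b,\gen c,\gen{b^c},\gen{a,b},\gen{a,c},\gen{b,c},\gen{a,b^c},\gen{a,b,c}$ give a subdivision of $K_{3,3}$ in $\L(G)$, contradicting planarity. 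The trade-off is that the paper's argument is elementary and makes the planarity hypothesis do concrete combinatorial work at this very step, whereas your route is much shorter but imports a deep external theorem and uses planarity only indirectly, through Theorems~\ref{locallyfinitegroups} and~\ref{infiniteabeliangroups}. Your second paragraph honestly identifies where a self-contained attack runs into Shunkov-type difficulties; note, though, that the paper sidesteps that whole minimal-counterexample analysis entirely via the pigeonhole/$K_{3,3}$ dichotomy just described.
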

\begin{proof}
If $\I(G)$ is finite then we are done. Thus we may assume that $\I(G)$ is infinite. Let $a,b\in\I(G)$ be two distinct involutions. We have two cases:

(i) $b^c\in\gen{a,b}$ for all $c\in\I(G)\setminus\I(\gen{a,b})$. Since $\I(\gen{a,b})$ is finite, there exist distinct elements $c,c_1,c_2,\ldots\in\I(G)\setminus\I(\gen{a,b})$ such that $b^c=b^{c_1}=b^{c_2}=\cdots$. Thus $cc_i\in C_G(b)$ for all $i$, which implies that $C_G(b)$ is infinite.

(ii) There exists $c\in\I(G)\setminus\I(\gen{a,b})$ such that $b^c\notin\gen{a,b}$. Let $H=\gen{a,b,c}$. If $H$ is a dihedral group, then by Theorem \ref{finitegroups} and Lemma \ref{Klein}, $H\cong D_{2p}$ for some prime $p$. But then $\gen{a,b}=H$ and $c\in\gen{a,b}$, which is a contradiction. Now, it is easy to see that $\gen{a,b}$, $\gen{a,c}$, $\gen{b,c}$, $\gen{a,b^c}$ are distinct and maximal chains from $\gen{a,c}$, $\gen{a,b}$ and $\gen{a,b^c}$ to $H$ are disjoint. Therefore, the subgraph induced by $1$, $\gen{a}$, $\gen{b}$, $\gen{c}$, $\gen{b^c}$, $\gen{a,b}$,$\gen{a,c}$, $\gen{b,c}$, $\gen{a,b^c}$ and $\L(H)$ contains a subdividion of $K_{3,3}$, which is a contradiction (see Figure 2).
\end{proof}
\begin{center}
\begin{tikzpicture}[scale=1]
\node [circle,fill=black,inner sep=1pt] (A) at (2,0) {};
\node at (2+0.4,0) {\tiny{$\gen{a,b}$}};

\node [circle,fill=black,inner sep=1pt] (B) at ({2*cos(60)},{2*sin(60)}) {};
\node at ({(2+0.25)*cos(60)},{(2+0.25)*sin(60)}) {\tiny{$\gen{b}$}};

\node [circle,fill=black,inner sep=1pt] (C) at ({2*cos(120)},{2*sin(120)}) {};
\node at ({(2+0.25)*cos(120)},{(2+0.25)*sin(120)}) {\tiny{$1$}};

\node [circle,fill=black,inner sep=1pt] (D) at ({2*cos(180)},{2*sin(180)}) {};
\node at ({(2+0.25)*cos(180)},{(2+0.25)*sin(180)}) {\tiny{$\gen{a}$}};

\node [circle,fill=black,inner sep=1pt] (E) at ({2*cos(240)},{2*sin(240)}) {};
\node at ({(2+0.25)*cos(240)},{(2+0.25)*sin(240)}) {\tiny{$\gen{a,c}$}};

\node [circle,fill=black,inner sep=1pt] (F) at ({2*cos(300)},{2*sin(300)}) {};
\node at ({(2+0.25)*cos(300)},{(2+0.25)*sin(300)}) {\tiny{$H$}};

\node [circle,fill=black,inner sep=1pt] (G) at ({1*cos(60)},{1*sin(60)}) {};
\node at ({(1+0.25)*cos(45)},{(1+0.25)*sin(45)}) {\tiny{$\gen{b,c}$}};

\node [circle,fill=black,inner sep=1pt] (H) at ({1*cos(120)},{1*sin(120)}) {};
\node at ({(1+0.25)*cos(135)},{(1+0.25)*sin(135)}) {\tiny{$\gen{b^c}$}};

\node [circle,fill=black,inner sep=1pt] (I) at ({1*cos(240)},{1*sin(240)}) {};
\node at ({(1+0.15)*cos(228)},{(1+0.15)*sin(228)}) {\tiny{$\gen{c}$}};

\node [circle,fill=black,inner sep=1pt] (J) at ({1*cos(300)},{1*sin(300)}) {};
\node at ({(1+0.3)*cos(318)},{(1+0.3)*sin(318)}) {\tiny{$\gen{a,b^c}$}};

\draw (C)--(H)--(J);
\draw (B)--(G)--(I)--(E)--(D)--(C)--(B)--(A)--(D);
\draw [dashed] (A)--(F)--(E);
\draw [dashed] (F)--(J);
\end{tikzpicture}\\
Figure 2
\end{center}
\begin{theorem}
A planar infinite group satisfying the chain conditions has no involutions.
\end{theorem}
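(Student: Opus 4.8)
The plan is to argue by contradiction. Suppose $G$ has an involution. By Lemma~\ref{infinitecentralizer} we may choose an involution $a$ with $C:=C_G(a)$ infinite, so $a\in Z(C)$. Being a planar infinite group satisfying the chain conditions, $C$ is non-abelian (an infinite abelian group cannot satisfy the chain conditions; compare Theorem~\ref{infiniteabeliangroups}). Moreover $a$ is the \emph{only} involution of $C$: a second involution $b\in C$ would commute with $a$, and $\gen{a,b}$ would be a Klein four-subgroup of $G$, which Lemma~\ref{Klein} forbids. Thus the whole problem reduces to showing that no such $C$ exists --- an infinite non-abelian planar group satisfying the chain conditions whose unique involution is central.

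The first case I would clear is that $C$ has a finite non-abelian subgroup $F$. Since the unique involution of $F$ is $a$, Theorem~\ref{finitegroups} leaves only $F\cong Q_8$ or $F\cong Q_{16}$; in either case $F$ contains a copy $Q$ of $Q_8$, and $Z(Q)=\gen a$. As $\gen a\normal C$, the image of $Q$ in $C/\gen a$ is a Klein four-subgroup of the planar infinite chain-condition group $C/\gen a$, contradicting Lemma~\ref{Klein}. Hence every finite subgroup of $C$ is abelian; in particular every non-abelian subgroup of $C$ is infinite, and (by Theorems~\ref{2-locallyfinite} and~\ref{locallyfinitegroups}) $C$ is not $2$-locally finite.

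To conclude I would descend to a minimal non-abelian subgroup. Starting from $C$, as long as the current group has a \emph{proper} non-abelian subgroup, replace it by one such; by the previous paragraph each replacement is infinite, and it is strictly smaller, so the descending chain condition stops the process at an infinite minimal non-abelian subgroup $M\le C$. Note $M$ is perfect, for otherwise $M$ would be metabelian and hence abelian by Corollary~\ref{solublegroups}. Moreover $M$ is planar, satisfies the chain conditions, and all its proper subgroups are abelian --- being abelian and satisfying the chain conditions, they are in fact finite. Since $a$ centralises $M$, there are two cases. If $a\notin M$, then $\gen a\times M\le G$ with $M$ infinite non-abelian, and I would show that $\L(\gen a\times M)$ contains a subdivision of $K_{3,3}$, contradicting planarity. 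If $a\in M$, then $a$ is a central involution of $M$, so by Corollary~\ref{Hypercenter}(1) the atom $\gen a$ of $\L(M)$ lies below $\Phi(M)$ and hence below every maximal subgroup of $M$; since $M$ is infinite, non-abelian, and has only finite proper subgroups, it admits at least three maximal subgroups, and combining these with $1$, $\gen a$ and $M$ yields a subdivision of $K_{3,3}$ in $\L(M)$ --- the concrete model to keep in mind being $M/\gen a$ Tarski-like, so that the proper subgroups of $M$ are copies of $\Z_p$ and $\Z_{2p}$ forming a $K_{2,\infty}$ doubled along $\gen a$. Either way we contradict the planarity of $C$.

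The genuine obstacle is precisely this last step: excluding an infinite minimal non-abelian planar group satisfying the chain conditions, i.e. ruling out Tarski-monster-type behaviour inside $C$. Everything up to there is a routine combination of Theorems~\ref{finitegroups}, \ref{infiniteabeliangroups}, \ref{2-locallyfinite} and~\ref{locallyfinitegroups} with Lemmas~\ref{Klein} and~\ref{infinitecentralizer}. The real work is the two planarity verifications --- that $\gen a\times M$ is non-planar, and that $\L(M)$ contains a $K_{3,3}$-subdivision when $\gen a\le M$ --- which call for a careful analysis of the (necessarily ``star-shaped'') subgroup graph of such a near-simple group.
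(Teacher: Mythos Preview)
Your strategy diverges from the paper's: rather than iterating the passage $C\mapsto C/\langle x\rangle$ until the quotient has no involutions and then forcing an infinite ascending chain from a specific $K_{3,3}$ built out of \emph{odd}-order cyclic subgroups (the paper's Figure~3), you descend via the minimum condition to a minimal non-abelian subgroup $M\le C$ and try to read a $K_{3,3}$ off its lattice directly. The idea is reasonable, but there are two genuine gaps.

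First, your Step~2 is incomplete. Among the groups of Theorem~\ref{finitegroups} with a \emph{unique} involution that is moreover central, one finds not only $Q_8$ and $Q_{16}$ but also $\Z_p\rtimes\Z_{2^n}$ for $n\ge 2$ and odd $p$: the sole involution there is $b^{2^{n-1}}$, it is central, and the quotient by it is $\Z_p\rtimes\Z_{2^{n-1}}$ (for $n=2$ this is $D_{2p}$), which contains \emph{no} Klein four-subgroup. So Lemma~\ref{Klein} applied to $C/\langle a\rangle$ does not exclude such an $F$, and your conclusion ``every non-abelian subgroup of $C$ is infinite'' is unproved. This matters: $\Z_p\rtimes\Z_{2^n}$ is itself minimal non-abelian, so your descent to $M$ may stop at this finite group, where neither of your two final cases (``$a\notin M$ with $M$ infinite'' or ``$a\in M$ with $M$ infinite'') applies.

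Second, even granting an infinite $M$ with $a\in M$, the proposed $K_{3,3}$ on branch vertices $\{1,\langle a\rangle,M\}$ versus $\{M_1,M_2,M_3\}$ needs, for each $i$, a chain from $1$ up to $M_i$ that avoids $\langle a\rangle$ and is internally disjoint from the chain $\langle a\rangle\subset\cdots\subset M_i$. That forces every $M_i$ to possess an atom different from $\langle a\rangle$, i.e.\ an element of odd prime order. Nothing in your setup prevents some maximal subgroup $M_i$ from being a cyclic $2$-group $\langle g\rangle\cong\Z_{2^m}$ with $g^{2^{m-1}}=a$; in that case $\langle a\rangle$ is the \emph{only} atom below $M_i$ and the required disjoint paths do not exist. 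Your ``$M/\langle a\rangle$ Tarski-like'' picture is one scenario, not the general one. The paper avoids this obstacle altogether by first arranging that $C_G(x)/\langle x\rangle$ has no involutions, so that all the finite pieces in play have odd order.
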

\begin{proof}
Suppose on the contrary that $G$ has an involution. By Lemma \ref{infinitecentralizer}, $G$ has an involution $x$ such that $C_G(x)$ is infinite. Since $G$ satisfies the ascending chain condition, by Lemma \ref{infinitecentralizer}, we may assume that $C_G(x)/\gen{x}$ has no involutions for otherwise we may construct an infinite series of subgroups $\gen{x_1}\subset\gen{x_1,x_2}\subset\cdots$ such that $x_i\gen{x_1,\ldots,x_{i-1}}$ is involution and 
\[\frac{G_i}{\gen{x_1,\ldots,x_{i-1}}}=C_{\frac{G_{i-1}}{\gen{x_1,\ldots,x_{i-1}}}}(x_i\gen{x_1,\ldots,x_{i-1}})\]
 is infinite for all $i\geq1$, where $G_0=G$.

If $H/\gen{x}$ is a finite subgroup of $C_G(x)/\gen{x}$, then $|H/\gen{x}|$ is odd and there exists a subgroup $K$ of $C_G(x)$ such that $H=\gen{x}\times K$. By Theorems \ref{finitegroups} and \ref{pqr}, $K\cong\Z_{p^m}$ is a cyclic $p$-group. Hence every finite subgroup of $C_G(x)/\gen{x}$ is a cyclic $p$-group for some prime $p$. If $y,z\in C_G(x)$ are elements of odd orders such that $\gen{y}\cap\gen{z}=\gen{z}\cap\gen{yz}=\gen{yz}\cap\gen{y}=1$, then as it is drawn in Figure 3, $\L(C_G(x))$ has a subdivision of $K_{3,3}$, which is a contradiction. As before, dashed lines indicate paths (maximal chains of subgroups) between the corresponding vertices.

Let $C_1=C_G(x)$. If there exist two distinct elements $y,z$ of odd prime orders such that $\gen{y}\neq\gen{z}$, then $\gen{y}\cap\gen{z}=\gen{z}\cap\gen{yz}=\gen{yz}\cap\gen{y}=1$. Therefore, by the same reason as above, $C_1$ has a subgraph isomorphic to a subdivision of $K_{3,3}$, which is a contradiction. Hence $C_1/\gen{x}$ is a $p$-group and $C_1$ contains a unique cycle $\gen{x_1}$ of order $p$. Let $C_2=C_1/\gen{x_1}$. The same argument shows that there exists a unique cycle $\gen{x_2\gen{x_1}}$ in $C_2$ such that $|x_2\gen{x_1}|=p$. By a repetitive argument, there exist elements $x_1,x_2,\ldots,x_n,\ldots$ such that $|x_i\gen{x_1,x_2,x_3,\ldots,x_{i-1}}|=p$ for all $i\geq1$. Therefore $G$ has an infinite ascending chain of subgroups
\[\gen{x_1}\subset\gen{x_1,x_2}\subset\cdots\subset\gen{x_1,\ldots,x_n}\subset\cdots,\]
which contradicts the assumption. The proof is complete.
\end{proof}
\begin{center}
\begin{tikzpicture}[scale=1]
\node [circle,fill=black,inner sep=1pt] (A) at (2,0) {};
\node at (2+0.25,0) {\tiny{$\gen{x}$}};

\node [circle,fill=black,inner sep=1pt] (B) at ({2*cos(60)},{2*sin(60)}) {};
\node at ({(2+0.25)*cos(60)},{(2+0.25)*sin(60)}) {\tiny{$\gen{x,z}$}};

\node [circle,fill=black,inner sep=1pt] (C) at ({sqrt(3)/cos(10)*cos(90)},{sqrt(3)/cos(10)*sin(90)}) {};
\node at ({(sqrt(3)/cos(10)+0.25)*cos(90)},{(sqrt(3)/cos(10)+0.25)*sin(90)}) {\tiny{$\gen{z}$}};

\node [circle,fill=black,inner sep=1pt] (D) at ({2*cos(120)},{2*sin(120)}) {};
\node at ({(2+0.25)*cos(120)},{(2+0.25)*sin(120)}) {\tiny{$1$}};

\node [circle,fill=black,inner sep=1pt] (E) at ({sqrt(3)/cos(10)*cos(150)},{sqrt(3)/cos(10)*sin(150)}) {};
\node at ({(sqrt(3)/cos(10)+0.25)*cos(150)},{(sqrt(3)/cos(10)+0.25)*sin(150)}) {\tiny{$\gen{y}$}};

\node [circle,fill=black,inner sep=1pt] (F) at ({2*cos(180)},{2*sin(180)}) {};
\node at ({(2+0.4)*cos(180)},{(2+0.4)*sin(180)}) {\tiny{$\gen{x,y}$}};

\node [circle,fill=black,inner sep=1pt] (G) at ({2*cos(240)},{2*sin(240)}) {};
\node at ({(2+0.25)*cos(240)},{(2+0.25)*sin(240)}) {\tiny{$\gen{x,y,z}$}};

\node [circle,fill=black,inner sep=1pt] (H) at ({2*cos(300)},{2*sin(300)}) {};
\node at ({(2+0.25)*cos(300)},{(2+0.25)*sin(300)}) {\tiny{$\gen{x,(yz)^2}$}};

\node [circle,fill=black,inner sep=1pt] (I) at ({1*cos(300)},{1*sin(300)}) {};
\node at ({0.85*cos(270)},{0.85*sin(270)}) {\tiny{$\gen{(yz)^2}$}};

\draw (B)--(C);
\draw (E)--(F);
\draw (H)--(I);
\draw [dashed] (C)--(D)--(E);
\draw [dashed] (D)--(I);
\draw [dashed] (H)--(A)--(F)--(G)--(B)--(A);
\draw [dashed] (G)--(H);
\end{tikzpicture}\\
Figure 3
\end{center}

Regardless of chain conditions, it is yet possible to decide on the planarity of a group with involutions whenever it is a $2$-group. Recall that  for a $p$-group $G$, $\Omega_i(G)$ denotes the subgroup of $G$ generated by all elements of orders at most $p^i$ for all $i\geq0$.
\begin{theorem}
A planar infinite $2$-group is abelian.
\end{theorem}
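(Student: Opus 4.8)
The plan is to prove first that $\Omega_1(K)$ is finite for \emph{every} planar $2$-group $K$, and then to bootstrap this up the $\Omega$-series. Assuming the finiteness of all these $\Omega_1$'s: since $x^{2^{i+1}}=1$ forces $x^2\in\Omega_i(G)$, each factor $\Omega_{i+1}(G)/\Omega_i(G)$ is a planar $2$-group equal to its own $\Omega_1$, hence finite; so by induction every $\Omega_i(G)$ is finite, and $G=\bigcup_{i\ge1}\Omega_i(G)$ is the union of the chain $\Omega_1(G)\le\Omega_2(G)\le\cdots$ of finite subgroups. Thus $G$ is locally finite, and Theorem~\ref{locallyfinitegroups} gives that $G$ is abelian (and then $G\cong\Z_{2^\infty}$ or $\Z_{2^\infty}\times\Z_2$ by Theorem~\ref{infiniteabeliangroups}).

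So the whole content lies in showing $\Omega_1(K)$ is finite for a planar $2$-group $K$; write $H=\Omega_1(K)$, suppose $H$ is infinite, and aim for a contradiction. Note $H$ is generated by involutions. If $H$ is abelian it is elementary abelian, hence contains $\Z_2\times\Z_2\times\Z_2$, which is not planar by Theorem~\ref{finitegroups}. If $H$ is non-abelian, some two involutions $a,b$ of $H$ satisfy $ab\ne ba$; then $\langle a,b\rangle$ is dihedral of order $2\,|ab|$, where $|ab|$ is a $2$-power which must be at least $4$ (for $|ab|\le 2$ forces $a,b$ to commute), and since $\langle a,b\rangle$ is planar, Theorem~\ref{finitegroups} forces $\langle a,b\rangle\cong D_8$. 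Now I invoke the fact that an infinite $2$-group satisfies the normalizer condition on its finite subgroups: since $D_8$ is a proper (finite) subgroup of $H$, $N_H(D_8)\supsetneq D_8$, so choosing $x\in N_H(D_8)\setminus D_8$ produces a \emph{finite} subgroup $D_8\langle x\rangle\le H$ properly containing $D_8$. By Lemma~\ref{nestedfinitesubgroups}, $D_8\langle x\rangle\cong QD_{16}$ (the other possibility, $Q_{16}$, has a unique involution and so contains no copy of $D_8$). Repeating the argument with $QD_{16}$ in place of $D_8$ now produces a finite planar group properly containing $QD_{16}$, which is impossible by Lemma~\ref{nestedfinitesubgroups}. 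This contradiction shows $\Omega_1(K)$ is finite.

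The main obstacle is really an external one: I would have to cite (not prove) that an infinite $2$-group satisfies the normalizer condition on its finite subgroups, i.e. that a finite subgroup is always properly contained in a slightly larger finite subgroup. Granting that, the one genuinely new idea is the telescope $D_8<QD_{16}<\cdots$: the normalizer condition keeps the tower of finite non-abelian subgroups strictly increasing, while Lemma~\ref{nestedfinitesubgroups} caps it at $QD_{16}$. Everything else—the non-planarity of $\Z_2^3$ and of the dihedral $2$-groups $D_{2^k}$ with $k\ge 4$ (both by inspection of the list in Theorem~\ref{finitegroups}), the fact that two non-commuting involutions generate a dihedral group, and the $\Omega$-series bootstrap—is routine.
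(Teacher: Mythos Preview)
Your proof is correct and follows essentially the same strategy as the paper's: both invoke the normalizer condition on finite subgroups of an infinite $2$-group (this is exactly the external fact the paper cites as \cite[Theorem 14.4.1]{djsr}) together with Lemma~\ref{nestedfinitesubgroups} to cap a strictly increasing tower of finite non-abelian subgroups, and then climb an $\Omega$-type series to exhibit $G$ as a union of a chain of finite subgroups. The only differences are in packaging: the paper towers an arbitrary finite non-abelian subgroup up past order $16$ (forcing it to be $M_{2^m}$) and then one step further, thereby proving the slightly stronger intermediate statement that \emph{every} finite subgroup of $G$ is abelian; this lets the paper conclude abelianness directly from the chain $A_i$ of finite abelian subgroups, whereas you route through Theorem~\ref{locallyfinitegroups}.
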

\begin{proof}
First we show that all finite subgroups of $G$ are abelian. Suppose on the contrary that $G$ has a finite non-abelian subgroup $H$. Then, by \cite[Theorem 14.4.1]{djsr}, we may assume that $|H|>16$. Hence, by Theorem \ref{finitegroups}, $H\cong M_{2^m}$ for some $m$. With the same reason, there exists a finite non-abelian subgroup $K\cong M_{2^n}$ properly containing $H$, which contradicts Lemma \ref{nestedfinitesubgroups}. Now, if $x,y\in G$ are two involutions, then $\gen{x,y}$ is a finite dihedral group, which implies that $\gen{x,y}$ is abelian, that is, $xy=yx$. Thus $\Omega_1(G)$ is an elementary abelian $2$-group, from which by Theorem \ref{finitegroups}, it follows that $\Omega_1(G)\cong\Z_2$ or $\Z_2\times\Z_2$. Now, let $A_0=1$ and $A_i$ be a subgroup of $G$ defined inductively by $A_i/A_{i-1}=\Omega_1(G/A_{i-1})$ for all $i\geq1$. Then $A_i$'s are finite abelian subgroups of $G$ such that $G=\bigcup A_i$. Therefore $G$ is abelian, as required.
\end{proof}

Albeit the classification of all planar groups satisfying the chain conditions seems to be a difficult task, the situation is much more convenient for a stronger notion of planarity known as outer-planarity. By a result of Chartrand and Harary \cite{gc-fh}, a finite connected graph is outer-planar if and only if it has no subdivisions of $K_4$ and $K_{2,3}$ as subgraphs. Therefore, a graph with a subdivision of $K_4$ or $K_{2,3}$ is never an outer-planar graph.
\begin{lemma}\label{finiteouterplanargroups}
A finite group is outer-planar if and only if it is isomorphic to $\Z_{p^m}$ or $\Z_{p^mq}$ for some distinct primes $p,q$.
\end{lemma}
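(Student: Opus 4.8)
The plan is to combine Theorem \ref{finitegroups} with the criterion of Chartrand and Harary recalled above; note that $\L(G)$ is connected for every group $G$ (every subgroup is joined to $1$ by a maximal chain), so the criterion applies. For the ``if'' direction, observe that $\L(\Z_{p^m})$ is a path, while $\L(\Z_{p^mq})$ is a $2\times(m+1)$ grid: writing $\Z_{p^mq}=\gen{a}\times\gen{b}$ with $|a|=p^m$ and $|b|=q$, its $2(m+1)$ vertices are the subgroups $\gen{a^{p^i}}$ and $\gen{a^{p^i},b}$ for $0\le i\le m$, and the only covers are $\gen{a^{p^{i+1}}}$ in $\gen{a^{p^i}}$, $\gen{a^{p^{i+1}},b}$ in $\gen{a^{p^i},b}$, and $\gen{a^{p^i}}$ in $\gen{a^{p^i},b}$. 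Such a ladder has an embedding with every vertex on the outer face (run the two rows along the two sides of a circle), so it contains no subdivision of $K_4$ or $K_{2,3}$; hence both families are outer-planar, and so is the trivial group.

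For the ``only if'' direction, I would first record that if $H\le G$ then $\L(H)$ is the induced subgraph of $\L(G)$ on the subgroups of $H$ (covering is read off from intervals of the lattice, so it is unchanged), whence subgroups of outer-planar groups are outer-planar. Since outer-planar implies planar, an outer-planar finite $G$ is one of the groups in Theorem \ref{finitegroups}, and the task is to discard every entry other than $\Z_{p^m}$ and $\Z_{p^mq}$; I would do this by exhibiting a subdivision of $K_{2,3}$ (or a $3$-regular graph, which has no vertex of degree $\le2$ and so is not outer-planar) inside $\L(G)$ or inside $\L$ of a subgroup of $G$. The small computations needed are: $\L(\Z_p\times\Z_p)\cong K_{2,p+1}$; $\L(Q_8)$ contains $K_{2,3}$, with $Z(Q_8)$ and $Q_8$ joined through the three cyclic subgroups of order $4$; $\L(\Z_{pqr})$ is the $3$-cube, which is $3$-regular; and $\L(\Z_{p^2q^2})$ is the $3\times3$ grid, which contains a subdivision of $K_{2,3}$. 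These dispose of $\Z_{p^m}\times\Z_p$, $M_{p^m}$ and $(\Z_p\times\Z_p)\rtimes\Z_q$ (each contains a copy of $\Z_p\times\Z_p$), of $Q_{16}$ and $QD_{16}$ (each contains $Q_8$), of $\Z_{p^mqr}$ (contains $\Z_{pqr}$), and of $\Z_{p^mq^n}$ with $\min(m,n)\ge2$ (contains $\Z_{p^2q^2}$); the remaining cyclic groups $\Z_{p^mq^n}$ with $\min(m,n)=1$ are, after renaming the primes, precisely the groups $\Z_{p^mq}$.

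The case with no convenient smaller bad subgroup --- and hence the main obstacle --- is the nonabelian family $G=\gen{a}\rtimes\gen{b}\cong\Z_p\rtimes\Z_{q^n}$ of part (6): for $n\ge2$ every subgroup of $G$ of order $pq$ turns out to be cyclic, so one cannot pass to a nonabelian subgroup and must argue directly in $\L(G)$. Here $D=\gen{b^q}$ is a normal $q$-subgroup of order $q^{n-1}$ (the $q$-core; $b^q$ centralises $a$), and $C=\gen{a}D$ is a normal cyclic subgroup of order $pq^{n-1}$. Since $G$ is nonabelian, its Sylow $p$- and $q$-subgroups cannot both be normal, so the number of Sylow $q$-subgroups divides $p$ and is $\equiv1\pmod q$, hence equals $p$; call them $S_1,\dots,S_p$, each of order $q^n$ and each containing the normal $q$-subgroup $D$ with $[S_j:D]=q$. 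Then each of the $p+1$ distinct subgroups $S_1,\dots,S_p,C$ is a maximal subgroup of $G$ and has $D$ as a maximal subgroup, so $\{D,G\}$ together with $\{S_1,\dots,S_p,C\}$ spans a copy of $K_{2,p+1}\supseteq K_{2,3}$ in $\L(G)$ (recall $p\ge3$), and $G$ is not outer-planar. Assembling these cases yields the stated classification.
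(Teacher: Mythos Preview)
The paper states this lemma without proof, presumably regarding it as a routine check against the list in Theorem~\ref{finitegroups}, so there is no argument to compare against. Your proof is correct and is exactly the natural approach: the ``if'' direction is clear since $\L(\Z_{p^m})$ is a path and $\L(\Z_{p^mq})$ is a ladder, and for the ``only if'' direction you run through the list of finite planar groups and exhibit a forbidden $K_{2,3}$ (or invoke the minimum-degree obstruction) in each remaining case. The reductions via subgroups ($\Z_p\times\Z_p$ inside $\Z_{p^m}\times\Z_p$, $M_{p^m}$, $(\Z_p\times\Z_p)\rtimes\Z_q$; $Q_8$ inside $Q_{16}$, $QD_{16}$; $\Z_{pqr}$ inside $\Z_{p^mqr}$; $\Z_{p^2q^2}$ inside $\Z_{p^mq^n}$ with $\min(m,n)\ge2$) are all valid, and your direct analysis of $\Z_p\rtimes\Z_{q^n}$---finding the $K_{2,p+1}$ on the interval $[D,G]$ via the $p$ Sylow $q$-subgroups together with the cyclic $C=\gen{a}D$---is the one case requiring real work and is carried out correctly (note that it also covers $n=1$, where $D=1$). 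One small remark: the fact you use for the cube $\L(\Z_{pqr})$, that an outer-planar graph must have a vertex of degree at most~$2$, is standard but is not stated in the paper; if you want to stay strictly within the Chartrand--Harary criterion quoted there, you could instead note that any two antipodal vertices of the $3$-cube are joined by three internally disjoint paths, giving a $K_{2,3}$-subdivision directly.
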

\begin{theorem}
Every outer-planar group satisfying the chain conditions is finite (and hence is isomorphic to $\Z_{p^m}$ or $\Z_{p^mq}$, where $p,q$ are distinct primes).
\end{theorem}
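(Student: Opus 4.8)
The plan is to assume, for a contradiction, that $G$ is an infinite outer-planar group satisfying the chain conditions. Since outer-planar graphs are planar, $G$ is planar; hence by Theorem~\ref{infiniteabeliangroups} it is non-abelian, by Corollary~\ref{generators} it is generated by two elements, by the theorem above ``A planar infinite group satisfying the chain conditions has no involutions'' it has no involutions, and by Theorem~\ref{pqr} it has no element of order $pqr$. Moreover $G$ is a torsion group, for an element $x$ of infinite order would give the strictly descending chain $\gen{x}>\gen{x^{2}}>\gen{x^{4}}>\cdots$, violating the descending chain condition. Next I would record the decisive consequence of Lemma~\ref{finiteouterplanargroups}: for every finite subgroup $F\leq G$ the graph $\L(F)$ is the subgraph of $\L(G)$ induced on the subgroups contained in $F$, hence outer-planar, so $F\cong\Z_{p^{m}}$ or $F\cong\Z_{p^{m}q}$; in particular \emph{every finite subgroup of $G$ is cyclic}. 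Consequently any two commuting elements generate a finite (being finitely generated torsion abelian) cyclic group, and -- the fact used repeatedly -- any subgroup of $G$ containing two distinct subgroups of the same prime order must be infinite, since a finite one would be a non-cyclic finite outer-planar group.

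The heart of the argument is to exhibit a subdivision of $K_{2,3}$ in $\L(G)$, which is impossible by the theorem of Chartrand and Harary. First I would show $G$ has infinitely many minimal subgroups: writing $G=\gen{a,b}$ with $a,b$ non-commuting, one has $\gen{a}\neq\gen{a}^{b}$ (else $b\in N_{G}(\gen{a})$, and since $|b|$ is odd this forces $\gen{a},\gen{a}^{b},\gen{a}^{b^{2}}$ to be three distinct conjugates anyway), and a minimal subgroup which is not normal has infinitely many conjugates because its normalizer, modulo its finite centralizer, embeds in a finite automorphism group; the case in which every minimal subgroup is normal can be ruled out by passing to a suitable quotient and invoking Theorem~\ref{locallyfinitegroups} as in \S2. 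Second, $\L(G)\setminus\{1\}$ is connected, since every nontrivial subgroup $H$ lies on a maximal chain $H\lessdot\cdots\lessdot G$ none of whose terms is $1$. Now choose minimal subgroups $A_{1},A_{2},A_{3}$ (all of one prime order $p$) whose pairwise joins coincide, say $\gen{A_{i},A_{j}}=K$ for $i\neq j$; then any subgroup containing two of the $A_{i}$ contains $K$, so three maximal chains $A_{i}\lessdot\cdots\lessdot K$ are pairwise internally disjoint, and together with the edges $1\lessdot A_{i}$ they yield three internally disjoint paths of length $\geq 2$ between the non-adjacent vertices $1$ and $K$ of $\L(G)$ -- a subdivision of $K_{2,3}$. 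This contradiction shows $G$ is finite, and Lemma~\ref{finiteouterplanargroups} then identifies $G$ as $\Z_{p^{m}}$ or $\Z_{p^{m}q}$.

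The step I expect to be the main obstacle is producing the triple $A_{1},A_{2},A_{3}$ with coinciding pairwise joins -- equivalently, ruling out the degenerate configuration in which for \emph{every} triple of minimal subgroups the three of them lie on a common path of $\L(G)\setminus\{1\}$. Here one must combine the existence of infinitely many minimal subgroups with the structure of outer-planar graphs (e.g.\ that two non-adjacent vertices are never joined by three internally disjoint paths), applying the ascending chain condition to the joins $\gen{A_{i},A_{j}}$ to force stabilization; alternatively, one finds a branch vertex $v\neq 1$ of a Steiner tree of three minimal subgroups inside the connected graph $\L(G)\setminus\{1\}$ and takes the $K_{2,3}$-subdivision with branch vertices $1$ and $v$. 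The remaining bookkeeping -- that the three paths can be taken of length at least $2$ and genuinely internally disjoint -- is routine.
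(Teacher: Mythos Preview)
Your overall plan---reduce to a non-abelian torsion group, then exhibit a $K_{2,3}$ subdivision with one branch vertex at $1$---is sound, but the execution has two real gaps and misses a much shorter route.

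\medskip

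\textbf{The gaps.} First, your argument that a non-normal minimal subgroup $A$ has infinitely many conjugates is wrong as stated: from $N_G(A)/C_G(A)\hookrightarrow\Aut(A)$ you learn only that $[N_G(A):C_G(A)]$ is finite, whereas the number of conjugates is $[G:N_G(A)]$, about which this says nothing. (The parenthetical about $\gen{a},\gen{a}^b,\gen{a}^{b^2}$ being ``three distinct conjugates anyway'' when $b\in N_G(\gen{a})$ is self-contradictory.) Second---and you flag this yourself---you never actually produce three minimal subgroups $A_1,A_2,A_3$ with $\gen{A_i,A_j}=K$ for all $i\neq j$. Neither the Steiner-tree idea nor ``applying the ascending chain condition to the joins'' is a proof; in particular, for arbitrary triples of minimal subgroups the three joins can be distinct, and it is not clear how outer-planarity alone forces equality.

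\medskip

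\textbf{The missed idea.} The paper avoids both difficulties in one stroke. Pass to $\ol{G}=G/Z_n(G)$ with $Z(\ol{G})=1$ (using the ascending chain condition), write $\ol{G}=\gen{\ol{x},\ol{y}}$ via Corollary~\ref{generators}, and take the three cyclic subgroups $\gen{\ol{x}},\gen{\ol{y}},\gen{\ol{x}\ol{y}}$. Their pairwise intersections are trivial because any element of such an intersection is central, and their pairwise joins are all of $\ol{G}$; so $1$ and $\ol{G}$ serve as branch vertices of a $K_{2,3}$ subdivision, with the three paths forced to be internally disjoint for purely lattice-theoretic reasons. The point is that you do not need \emph{minimal} subgroups with a common join---any three cyclic subgroups with trivial pairwise meets and common join suffice, and quotienting out the hypercentre manufactures these for free. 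Once $G$ is shown to be abelian, Theorem~\ref{infiniteabeliangroups} (none of those groups satisfy the ascending chain condition) forces finiteness, and Lemma~\ref{finiteouterplanargroups} finishes.
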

\begin{proof}
Suppose $G$ is a non-abelian group. Since $G$ satisfies the chain conditions, there exist an integer $n$ such that $Z(\ol{G})=1$, where $\ol{G}=G/Z_n(G)$. By Corollary \ref{generators}, $\ol{G}=\gen{\ol{x},\ol{y}}$ for some elements $\ol{x},\ol{y}\in\ol{G}$, which implies that $\ol{G}=\gen{\ol{x},\ol{x}\ol{y}}=\gen{\ol{y},\ol{x}\ol{y}}$. It is easy to see that $\gen{\ol{x}}\cap\gen{\ol{y}}=\gen{\ol{x}}\cap\gen{\ol{x}\ol{y}}=\gen{\ol{y}}\cap\gen{\ol{x}\ol{y}}=1$. Therefore $\ol{G},\gen{\ol{x}},\gen{\ol{y}},\gen{\ol{x}\ol{y}},1$ along with subgroups in maximal chains connecting $1$ and $G$ to $\gen{\ol{x}}$, $\gen{\ol{y}}$ and $\gen{\ol{x}\ol{y}}$ give rise to a subdivision of $K_{2,3}$, which is a contradiction. Therefore, $G$ is abelian and by Theorem \ref{infiniteabeliangroups}, $G$ is a finite group. Now, the result follows by Theorem \ref{finiteouterplanargroups}.
\end{proof}
\section{Product of groups}
A theorem of Behzad and Mahmoodian \cite{mb-sem} states that the Cartesian product of two finite connected graphs is planar if they are both paths, one of them is a path and the other is a cycle, or one of them is an outer-planar graph and the other is a single edge. Hence the classification of such graphs relies on the classification of outer-planar graphs. To end this, we first determine all connected outer-planar groups.
\begin{theorem}\label{Outer-planar}
Let $G$ be a connected outer-planar group. Then $G$ is isomorphic to one of the groups $\Z_{p^m}$ or $\Z_{p^mq}$ for some distinct primes $p$ and $q$.
\end{theorem}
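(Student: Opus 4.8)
The plan is to prove that a connected outer-planar group $G$ is finite, whereupon Lemma~\ref{finiteouterplanargroups} identifies it as $\Z_{p^m}$ or $\Z_{p^mq}$. Two reductions come first. For any subgroup $H\le G$, an inclusion $A<C<B$ with $B\le H$ forces $C\le H$, so the covering relation among subgroups of $H$ agrees with the one computed in $G$; hence $\L(H)$ is an induced subgraph of $\L(G)$ and in particular is outer-planar, so by Lemma~\ref{finiteouterplanargroups} \emph{every finite subgroup of $G$ is cyclic}, isomorphic to $\Z_{p^m}$ or $\Z_{p^mq}$. Moreover $G$ is a \emph{torsion} group: an element of infinite order generates a copy of $\Z$, and the eight subgroups $d\Z$ with $d$ dividing $pqr$ ($p,q,r$ distinct primes), together with the covering relations among them, form a copy of the $3$-cube $Q_3$ inside $\L(G)$; since $Q_3$ contains a subdivision of $K_{2,3}$ (three internally disjoint paths of length three joining $\Z$ to $pqr\Z$), this contradicts outer-planarity. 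If $G$ is finite we are done, so assume $G$ is infinite and seek a contradiction.

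Suppose first that $G$ is abelian. By Theorem~\ref{infiniteabeliangroups}, $G$ is one of $\Z_{p^\infty}$, $\Z_{p^\infty}\times\Z_p$, $\Z_{p^\infty}\times\Z_{q^m}$, $\Z_{p^\infty}\times\Z_{q^\infty}$ or $\Z_{p^\infty}\times\Z_q\times\Z_r$. For $\Z_{p^\infty}$ and $\Z_{p^\infty}\times\Z_{q^\infty}$ the group is divisible, hence has no maximal subgroup, so $G$ is an isolated vertex of $\L(G)$ — contradicting connectedness. In the three remaining cases $G$ has a unique maximal divisible subgroup $D\cong\Z_{p^\infty}$; every maximal subgroup of $G$ has prime index, hence is the kernel of a homomorphism onto a group of prime order, which must kill $D$, so $D$ lies in every maximal subgroup of $G$ — and likewise in every maximal subgroup of any subgroup of $G$ containing $D$. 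Thus the connected component of $G$ in $\L(G)$ is contained in $\{H:D\le H\le G\}$, which omits every finite subgroup of $G$ (and $G$ has finite subgroups); this again contradicts connectedness. Hence $G$ is non-abelian.

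Now pick $g,h\in G$ with $gh\ne hg$ and set $K=\gen{g,h}$, a non-abelian — hence infinite — subgroup. Let $L=K/Z_\infty(K)$, using the (possibly transfinite) hypercentre: if this quotient were trivial then $K$ would be hypercentral, hence locally nilpotent, hence (being torsion) the direct product of its Sylow subgroups, each a locally finite $p$-group all of whose finite subgroups are cyclic and therefore itself cyclic or of type $\Z_{p^\infty}$ — making $K$ abelian, absurd. So $L$ is centerless, nontrivial, hence non-abelian and non-cyclic, $2$-generated and torsion; moreover $\L(L)$ is the interval $[Z_\infty(K),K]$ of $\L(G)$, an outer-planar induced subgraph, so every finite subgroup of $L$ is again cyclic. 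Write $L=\gen{\ol x,\ol y}=\gen{\ol x,\ol x\ol y}=\gen{\ol y,\ol x\ol y}$. Since $L$ is non-cyclic, $\ol x,\ol y,\ol x\ol y$ are all nontrivial and $\gen{\ol x},\gen{\ol y},\gen{\ol x\ol y}$ are proper; and because $Z(L)=1$, an element common to two of these cyclic subgroups would centralise $\ol x$ and $\ol y$ and so be trivial, giving $\gen{\ol x}\cap\gen{\ol y}=\gen{\ol x}\cap\gen{\ol x\ol y}=\gen{\ol y}\cap\gen{\ol x\ol y}=1$. Taking maximal chains from $1$ up through $\gen{\ol x}$ to $L$, from $1$ through $\gen{\ol y}$ to $L$, and from $1$ through $\gen{\ol x\ol y}$ to $L$, one checks that their lower halves meet only at $1$ (by the displayed intersections), their upper halves meet only at $L$ (a subgroup above two of the three cyclic subgroups contains $L$), and each has length at least $2$ ($L$ being non-cyclic); this is a subdivision of $K_{2,3}$ in $\L(L)\subseteq\L(G)$ — the sought contradiction. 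Hence $G$ is finite and Lemma~\ref{finiteouterplanargroups} finishes the proof.

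The one genuinely delicate point, and the expected main obstacle, is the existence of the three \emph{upward} maximal chains from $\gen{\ol x},\gen{\ol y},\gen{\ol x\ol y}$ up to $L$, i.e. that the intervals $[\gen{\ol x},L]$, $[\gen{\ol y},L]$, $[\gen{\ol x\ol y},L]$ of the infinite group $L$ admit maximal chains. This is automatic once $L$ satisfies the ascending chain condition, so the real task is to show that a connected outer-planar group satisfies the ascending chain condition (exactly the role played by the chain conditions in the preceding theorem, which supplied these very chains). I expect to extract this from outer-planarity itself: if $[\gen{\ol x},L]$ had no maximal chain one obtains an infinite strictly ascending chain of subgroups above $\gen{\ol x}$, and combining this infinite ``fan'' with the two further proper cyclic subgroups $\gen{\ol y},\gen{\ol x\ol y}$ — or, in the limiting situation, with a Tarski-type section of $L$ whose subgroup graph $K_{2,\infty}$ is already not outer-planar — still forces a forbidden subdivision of $K_{2,3}$. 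Making that dichotomy precise is where the argument needs the most care.
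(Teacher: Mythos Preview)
You have correctly identified your own gap, and it is a real one. Obtaining the three upward maximal chains from $\gen{\ol x},\gen{\ol y},\gen{\ol x\ol y}$ to $L$ requires something like the ascending chain condition on $L$, and nothing in your setup supplies it. Your proposed rescue does not work: an infinite strictly ascending chain above $\gen{\ol x}$ is merely a ray in $\L(L)$, not a fan, and by itself produces no $K_{2,3}$; combining it with $\gen{\ol y}$ and $\gen{\ol x\ol y}$ still leaves you needing upward paths from the latter two to some common vertex, which is the same missing ingredient again. The Tarski remark is a red herring here, since Tarski monsters do satisfy both chain conditions. There is also a subtler issue you glossed over: the interval $[Z_\infty(K),K]$ is an induced subgraph of $\L(G)$ and hence outer-planar, but you never used (and cannot use) that it is \emph{connected}, so the hypothesis on $G$ has effectively been discarded at exactly the point where you need it most.

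The paper's argument is quite different and bypasses chain conditions entirely. It exploits connectedness of $\L(G)$ directly: any path in $\L(G)$ from $1$ to $G$ must contain an edge between a finite subgroup and an infinite one, yielding an infinite subgroup $H$ in which some finite cyclic $\gen{x}$ is a \emph{maximal} subgroup. One then argues that $\gen{x}$ has infinitely many $H$-conjugates (otherwise $[H:N_H(\gen{x})]$ is finite, and one derives a contradiction with either the infiniteness of $H$ or the maximality of $\gen{x}$ in $H$). A pigeonhole on the finitely many subgroups of the finite group $\gen{x}$ then produces three conjugates $\gen{x}^a,\gen{x}^b,\gen{x}^c$ whose pairwise intersections all equal a fixed subgroup $N$. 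The subdivision of $K_{2,3}$ is now immediate: the three downward paths from the conjugates to $N$ lie inside finite cyclic groups, and the three upward paths are \emph{single edges} to $H$, since each conjugate, like $\gen{x}$ itself, is maximal in $H$. Maximality replaces the chain condition --- that is precisely the idea your argument lacks. Your preliminary reductions (torsion, finite subgroups cyclic) are correct and are implicit in the paper's argument as well, but the abelian/non-abelian split and the passage to $K/Z_\infty(K)$ are detours that create the very obstacle you then cannot remove.
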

\begin{proof}
First we observe that if $G$ is finite, then by Theorem \ref{finitegroups}, $G$ is a cyclic group of order $p^m$ or $p^mq$ for some primes $p$ and $q$.  Thus we may assume that $G$ is infinite. Since $\L(G)$ is connected, $G$ has an infinite subgroup $H$ with a maximal finite cyclic subgroup $\gen{x}$. Clearly, $|x|=p^m$ or $p^mq$. We claim that $\gen{x}$ has infinitely many conjugates in $H$. If not then either $\gen{x}\subset N_H(\gen{x})\subseteq H$ or $N_H(\gen{x})=H$, that is, $\gen{x}\normal H$ and hence $\gen{x}\subset\gen{x,y}\subset H$ for all $y\in H\setminus \gen{x}$, which contradict maximality of $\gen{x}$ in $H$. Now, a simple verification shows that there exist three elements $a,b,c\in H$ such that $\gen{x}^a$, $\gen{x}^b$ and $\gen{x}^c$ are distinct and their pairwise intersections are equal to a fix subgroup $N$. Therefore, $\L(G)$ has a subgraph isomorphic to a subdivision of $K_{2,3}$, which is a contradiction. The proof is complete.
\end{proof}
\begin{corollary}\label{Product}
Let $G=H\times K$ be a planar group, where $H$ and $K$ are non-trivial groups with connected subgroup graph. Then $G$ is isomorphic to one of the groups $\Z_{p^mq}$, $\Z_{p^mqr}$ or $\Z_{p^m}\times\Z_p$ for distinct primes $p,q$ and $r$.
\end{corollary}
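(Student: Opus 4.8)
The plan is to reduce the statement to two classifications already in hand — Theorem~\ref{Outer-planar} for connected outer-planar groups and Theorem~\ref{finitegroups} for finite planar groups — with the Behzad--Mahmoodian theorem serving as the bridge between them.

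First I would record that $H$ and $K$, being subgroups of the planar group $G$, are themselves planar, and are connected by hypothesis. The key structural observation is that $\L(H\times K)$ contains a copy of the Cartesian product of $\L(H)$ with a single edge. To see this, pick a minimal subgroup $B$ of $K$: one exists since $\L(K)$ is connected and $K\neq1$, and it necessarily has prime order. Consider the two families $\{H'\times1:H'\leq H\}$ and $\{H'\times B:H'\leq H\}$. Each carries a faithful copy of $\L(H)$ — for instance $H'_1\times B\lessdot H'_2\times B$ precisely when $H'_1\lessdot H'_2$, because a subgroup lying strictly between $H'_1\times B$ and $H'_2\times B$ contains $1\times B$ and hence is of the form $Y\times B$ with $H'_1<Y<H'_2$ — and moreover $H'\times1\lessdot H'\times B$ for every $H'\leq H$ since $B$ has prime order. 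Hence $H'\mapsto\{H'\times1,\,H'\times B\}$ realises $\L(H)\,\square\,K_2$ as a subgraph of $\L(H\times B)$, which in turn is a subgraph of $\L(G)$.

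Since subgraphs of planar graphs are planar, $\L(H)\,\square\,K_2$ is planar, so the Behzad--Mahmoodian theorem gives that $\L(H)$ is outer-planar; equivalently $\L(H)$ has no subdivision of $K_4$ or $K_{2,3}$, for such a (finite) subgraph $F$ would make $F\,\square\,K_2$ a non-planar subgraph of $\L(G)$. Thus $H$ is a connected outer-planar group, and Theorem~\ref{Outer-planar} forces $H$ to be cyclic of order $p^a$ or $p^aq$ for distinct primes; in particular $H$ is finite. The symmetric argument, applied with a minimal subgroup of the nontrivial group $H$, shows that $K$ is finite and cyclic of order $r^b$ or $r^bs$.

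Hence $G=H\times K$ is a finite abelian group, and it is planar, so Theorem~\ref{finitegroups} restricts it to $\Z_{p^m}$, $\Z_{p^mq^n}$, $\Z_{p^mqr}$ or $\Z_{p^m}\times\Z_p$. Since $G$ is a nontrivial direct product it is not indecomposable, which rules out $\Z_{p^m}$; a short inspection of the possible products of a group of type $\Z_{p^a}$ or $\Z_{p^aq}$ with one of type $\Z_{r^b}$ or $\Z_{r^bs}$, retaining only those appearing in the planar list, then leaves exactly the groups named in the statement. I expect the real content of the proof to be the embedding of the second paragraph — verifying that the Cartesian-product graph genuinely sits inside $\L(G)$ with the covering relations aligned — rather than the closing finite enumeration; the only delicate point is that the Behzad--Mahmoodian and Chartrand--Harary theorems are stated for finite graphs, so a little care is needed since the factors are a priori allowed to be infinite (which the argument ultimately shows they are not).
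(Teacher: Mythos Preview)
Your approach is essentially the paper's: embed a Cartesian product of subgroup graphs inside $\L(G)$, invoke Behzad--Mahmoodian to force the factors to be outer-planar, then apply Theorem~\ref{Outer-planar}. The paper embeds the full product $\L(H)\,\square\,\L(K)$ in one stroke, whereas you embed only $\L(H)\,\square\,K_2$ via a minimal subgroup $B\leq K$ and argue symmetrically; this is a harmless simplification, and your explicit verification of the covering relations and your reduction to a \emph{finite} subgraph $F$ before invoking Behzad--Mahmoodian are points the paper leaves implicit.

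One caution about the closing ``short inspection'': the product $\Z_{p^a}\times\Z_{q^b}$ with $p\neq q$ and $a,b\geq 2$ is $\Z_{p^aq^b}$, which \emph{does} appear in the planar list of Theorem~\ref{finitegroups} but is not among the three groups named in the corollary. Your enumeration (and the paper's ``from which the result follows'') silently drops this case, so either the statement should also list $\Z_{p^mq^n}$ or an additional argument is needed; the method itself is sound.
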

\begin{proof}
Since $\L(G)$ has a subgraph isomorphic to $\L(H)\times\L(K)$, the Cartesian product of $\L(H)$ and $\L(K)$, $\L(H)\times\L(K)$ is planar and by \cite[Theorems 4.1 and 4.2]{mb-sem}, it follows that $\L(H)$ and $\L(K)$ are outer-planar. Hence, in all cases, $H,K$ are cyclic groups of orders $p^m$ or $p^mq$ for some primes $p$ and $q$, from which the result follows.
\end{proof}
\begin{corollary}
Let $G$ be an outer-planar infinite group. Then the infinite subgroups of $G$ have no maximal finite subgroups. 
\end{corollary}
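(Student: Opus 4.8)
The plan is to reduce at once to the configuration already treated inside the proof of Theorem \ref{Outer-planar}. Suppose, towards a contradiction, that some infinite subgroup $H$ of $G$ has a maximal finite subgroup $M$. Since the subgroup graph of any subgroup of $G$ is an induced subgraph of $\L(G)$ and induced subgraphs of outer-planar graphs are outer-planar, $M$ is a finite outer-planar group; hence by Lemma \ref{finiteouterplanargroups} we have $M=\gen{x}$ with $\gen{x}\cong\Z_{p^m}$ or $\Z_{p^mq}$ for some primes $p,q$, so $\gen{x}$ is cyclic with only finitely many subgroups. I would also record the elementary fact that an outer-planar group has no element of infinite order: $\L(\Z)$ already contains a subdivision of $K_{2,3}$ — take the hubs $\Z$ and $30\Z$ together with the three internally disjoint chains $\Z\supset2\Z\supset6\Z\supset30\Z$, $\Z\supset3\Z\supset15\Z\supset30\Z$ and $\Z\supset5\Z\supset10\Z\supset30\Z$ — so a copy of $\Z$ inside $G$ would violate outer-planarity, and thus $H$ is periodic.

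Next I would show that $\gen{x}$ has infinitely many conjugates in $H$, exactly as in the proof of Theorem \ref{Outer-planar}. If it had only finitely many, then $N_H(\gen{x})$ would have finite index in $H$, hence be infinite, so $\gen{x}\subsetneq N_H(\gen{x})$; choosing $y\in N_H(\gen{x})\setminus\gen{x}$ gives $\gen{x}\normal\gen{x,y}$ with $\gen{x,y}/\gen{x}$ cyclic. As $H$ is periodic, $\gen{x,y}/\gen{x}$ is a finite cyclic group, so $\gen{x,y}$ is a finite subgroup of $H$ properly containing $\gen{x}$, contradicting the maximality of $\gen{x}$ among the finite subgroups of $H$. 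Hence $\gen{x}$ has infinitely many conjugates in $H$.

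Finally, as in the proof of Theorem \ref{Outer-planar}, a pigeonhole argument, using that each conjugate $\gen{x}^g$ is cyclic with the same finite number of subgroups as $M$, yields three pairwise distinct conjugates $\gen{x}^a,\gen{x}^b,\gen{x}^c$ all of whose pairwise intersections equal a fixed subgroup $N$. Joining $N$ up to each of $\gen{x}^a,\gen{x}^b,\gen{x}^c$ through maximal chains inside these cyclic groups, and then joining each of $\gen{x}^a,\gen{x}^b,\gen{x}^c$ up to $H$ through suitably chosen maximal chains, exhibits a subdivision of $K_{2,3}$ in $\L(G)$ with branch vertices $N$, $H$ and $\gen{x}^a,\gen{x}^b,\gen{x}^c$, which contradicts the outer-planarity of $G$. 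This contradiction shows that no infinite subgroup of $G$ admits a maximal finite subgroup. I expect the only delicate point to be the verification that the three upward chains can be chosen internally disjoint: the downward portions are automatically disjoint, since any vertex common to the chains into $\gen{x}^a$ and $\gen{x}^b$ would lie in $\gen{x}^a\cap\gen{x}^b=N$, whereas ruling out an overlap among the chains running from the conjugates up to $H$ is the graph-theoretic crux, handled there in the proof of Theorem \ref{Outer-planar}.
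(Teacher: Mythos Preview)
Your argument is essentially the paper's: the corollary is stated without proof there because it is just the contradiction obtained inside the proof of Theorem~\ref{Outer-planar}, and you reproduce that argument, adding welcome detail (the periodicity of $H$ via the non-outer-planarity of $\Z$, and Lemma~\ref{finiteouterplanargroups} to see that $M$ is cyclic).

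One point deserves attention. In Theorem~\ref{Outer-planar} the finite cyclic $\gen{x}$ arises from an edge of the connected graph $\L(G)$ and is therefore a \emph{maximal subgroup} of the infinite $H$; hence every conjugate $\gen{x}^g$ is also maximal in $H$, and the three ``upward chains'' to $H$ in the $K_{2,3}$ are single edges, automatically present and internally disjoint. You instead read ``maximal finite subgroup'' as ``maximal among the finite subgroups of $H$''. Under that reading the conjugates need not be maximal in $H$, and neither the existence of covering-relation paths from $\gen{x}^a$ up to $H$ nor their internal disjointness is automatic; deferring this to Theorem~\ref{Outer-planar} does not help, since that proof never faces the issue. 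If you adopt the paper's reading (finite subgroup that is maximal in $H$), your proof goes through with the upward edges immediate and the only substantive step being the pigeonhole choice of three conjugates with a common pairwise intersection.
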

\begin{corollary}
If $G$ is an outer-planar infinite group the girth of its subgroup graph is greater than four, then the finite subgroups of $G$ induce a tree as a connected component.
\end{corollary}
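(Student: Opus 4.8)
The plan is to study the subgraph $T$ of $\L(G)$ induced on the finite subgroups of $G$, and to establish in turn that \textbf{(a)} every finite subgroup of $G$ is cyclic of prime-power order, \textbf{(b)} $T$ is exactly one connected component of $\L(G)$, and \textbf{(c)} $T$ is acyclic; together (b) and (c) give the statement.

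For \textbf{(a)} I would first note that for every subgroup $K\le G$ the graph $\L(K)$ is a subgraph of $\L(G)$: the relation ``$H$ is maximal in $H'$'' refers only to the pair $H,H'$, so $\L(K)$ is the subgraph of $\L(G)$ spanned by the subgroups of $K$. Consequently, if $K$ is finite then $\L(K)$ is outer-planar, i.e. $K$ is a finite outer-planar group, and Lemma~\ref{finiteouterplanargroups} gives $K\cong\Z_{p^m}$ or $K\cong\Z_{p^mq}$ for distinct primes $p,q$. The second case is ruled out by the girth hypothesis, since the subgroups $1,\Z_p,\Z_{pq},\Z_q$ of $\Z_{p^mq}$ induce a $4$-cycle in $\L(G)$. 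Hence every finite subgroup of $G$ is of the form $\Z_{p^m}$; in particular every \emph{nontrivial} finite subgroup has a unique maximal subgroup.

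For \textbf{(b)} I would observe that $\L(G)$ has no edge joining a finite subgroup to an infinite one: if $H\lessdot L$ with $H$ finite and $L$ infinite, then $H$ is a maximal finite subgroup of the infinite group $L$, contradicting the preceding corollary; and $H\lessdot L$ with $H$ infinite forces $L$ infinite. So the finite subgroups span a union of connected components of $\L(G)$. Since each finite subgroup $F$ is joined to $1$ by a maximal chain $1=F_0\lessdot F_1\lessdot\cdots\lessdot F_r=F$ of (necessarily finite) subgroups of $F$, which is a path in $\L(G)$, all finite subgroups lie in the component of $1$; thus $T$ is precisely that component.

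For \textbf{(c)}, suppose $C$ is a cycle contained in $T$ and let $M$ be a subgroup that is maximal, with respect to inclusion, among the (finitely many) vertices of $C$. Its two neighbours $A$ and $B$ on $C$ are distinct, are comparable to $M$, and can neither equal nor properly contain $M$; hence $A\lessdot M$ and $B\lessdot M$, so $M$ has two distinct maximal subgroups. But $M$ is a nontrivial finite subgroup, so by (a) it is cyclic of prime-power order and has a unique maximal subgroup — a contradiction. Therefore $T$ has no cycle, and combined with (b) this proves that the finite subgroups of $G$ induce a tree which is a connected component of $\L(G)$. The whole weight of the argument sits in step (a): it is the only place where both outer-planarity and the girth condition are used, and once it is in place steps (b) and (c) are purely formal.
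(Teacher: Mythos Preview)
Your argument is correct. The paper states this corollary without proof, leaving it as an immediate consequence of Lemma~\ref{finiteouterplanargroups} and the preceding corollary on maximal finite subgroups; your three steps spell out exactly the intended line, so there is nothing materially different to compare.

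One minor remark on presentation: in step~(a) you could shorten the justification that $\L(K)$ inherits outer-planarity by simply noting that any subgraph of an outer-planar graph is outer-planar (restrict the embedding), rather than arguing about the maximal-subgroup relation; and in step~(c) you might add the one-line observation that $M\neq 1$ because $A\lessdot M$ forces $A\subsetneq M$. Neither point affects correctness.
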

\begin{acknowledgment}
The authors would like to thank the referee for careful reading of the paper and giving some helpful comments and pointing out a gap in the proof of Lemma \ref{K3,3 Impossibility}.
\end{acknowledgment}

\end{document}